\newtheorem{theo}{Theorem}[section]
\newtheorem{lemm}[theo]{Lemma}
\newtheorem*{question}{Question}
\theoremstyle{definition}
\newtheorem*{rema}{Remark}
\newcommand{\van}{{\rm mult}}
\author[S. R. Kaschner]{Scott R. Kaschner}
\address{Butler University Department of Mathematics \& Actuarial Science\\
Jordan Hall, Room 270\\
4600 Sunset Ave\\
Indianapolis, IN 46208,
 United States,}
\email{skaschne@butler.edu}
\author[R. A. P\'erez]{Rodrigo A. P\'erez}
\address{IUPUI Department of Mathematical Sciences\\
LD Building, Room 270\\
402 North Blackford Street\\
Indianapolis, IN 46202-3267,
 United States.}
\email{rperez@math.iupui.edu}
\urladdr{www.math.iupui.edu/~rperez}
\author[R. K. W. Roeder]{Roland~K.~W.~Roeder}
\address{IUPUI Department of Mathematical Sciences\\
LD Building, Room 270\\
402 North Blackford Street\\
Indianapolis, IN 46202-3267,
 United States.}
\email{rroeder@math.iupui.edu}
\urladdr{www.math.iupui.edu/~rroeder}
\title[Rational maps of $\mathbb{CP}^2$ with $\lambda_1=\lambda_2$ and no invariant foliation]{Examples of rational maps of $\mathbb{CP}^2$ with equal dynamical degrees and no~invariant~foliation}
\begin{document}

\begin{abstract}
We present simple examples of rational maps of the complex projective plane with equal first and second dynamical degrees and no invariant foliation.
\end{abstract}

\maketitle

\section{Introduction}

A meromorphic map $\varphi: X \dashrightarrow X$ of a compact K\"ahler manifold $X$ induces a well-defined pullback action $\varphi^*: H^{p,p}(X) \rightarrow H^{p,p}(X)$ for each $1 \leq p \leq \dim(X)$.   The $p$-th dynamical degree
\begin{eqnarray}\label{EQN:DEF_DYN_DEG}
\lambda_p(\varphi) := \lim_{n \rightarrow \infty} \|(\varphi^n)^*:H^{p,p}(X) \rightarrow H^{p,p}(X)\|^{1/n}
\end{eqnarray}
describes the asymptotic growth rate of the action of iterates of $\varphi$ on $H^{p,p}(X)$.   Originally, the dynamical degrees were introduced by Friedland \cite{FRIEDLAND} and later by Russakovskii and Shiffman
\cite{RUSS_SHIFF} and shown to be invariant under birational conjugacy by Dinh and Sibony \cite{DS_BOUND}.  Note that dynamical degrees were originally defined with the limit in (\ref{EQN:DEF_DYN_DEG})
replaced by ${\rm lim sup}$. However, it was shown in \cite{DS_BOUND,DS_REGULARIZATION} that the limit always exists.

One says that $\varphi$ is {\em cohomologically hyperbolic} if one of the dynamical degrees is strictly larger than all of the others.  
In this case, there is a conjecture \cite{GUEDJ_ERGODIC,GUEDJ_HABILITATION} that describes the ergodic properties of $\varphi$.  This conjecture has been proved in several particular sub-cases \cite{GUEDJ,DDG,DNT_ENTROPY}. 

What happens in the non-cohomologically hyperbolic case?  If a meromorphic map preserves a fibration, then there are nice formulae relating the dynamical degrees of the map \cite{DN,DNT}. This is the case in the following two examples:
{\renewcommand{\labelenumi}{\alph{enumi})}
\begin{enumerate}
  \item It was shown in \cite{DF} that bimeromorphic  maps of
surfaces that are not cohomologically hyperbolic ($\lambda_1(\varphi) = \lambda_2(\varphi)
= 1$) always preserve an invariant fibration.
  \item Meromorphic maps that are not cohomologically hyperbolic arise naturally
when studying the spectral theory of operators on self-similar spaces
\cite{SABOT,BG,GZ,BGN}.   All the examples studied in that context preserve an invariant fibration.  In several cases, this fibration made it significantly easier to compute the limiting spectrum of the action.
\end{enumerate} }

Based on this evidence, Guedj asked in \cite[p. 103]{GUEDJ_HABILITATION} whether every non-cohomologically hyperbolic map preserves a fibration.

We will prove:
\begin{theo}\label{THM:MAIN}
The rational map $\varphi: \mathbb{CP}^2 \dashrightarrow \mathbb{CP}^2$  given by
\begin{equation}
\varphi[X:Y:Z]\ =\ [-Y^2:X(X-Z):-(X+Z)(X-Z)]
\end{equation}
is not cohomologically hyperbolic ($\lambda_1(\varphi) = \lambda_2(\varphi)
= 2$), and no iterate of $\varphi$ preserves a singular holomorphic foliation. Moreover, for a Baire generic set of automorphisms $A \in {\rm PGL}(3,\mathbb{C})$, the composition $A \circ {\varphi^4}$ has the same properties.
\end{theo}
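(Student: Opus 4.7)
The plan is to proceed in three stages: compute $\lambda_1(\varphi) = \lambda_2(\varphi) = 2$; rule out invariant singular holomorphic foliations for every iterate $\varphi^n$; and propagate both properties to generic perturbations $A \circ \varphi^4$ via a Baire category argument.

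\textbf{Dynamical degrees.} The three coordinates of $\varphi$ are pairwise coprime homogeneous polynomials of degree $2$, so $\deg(\varphi) = 2$. Solving the simultaneous vanishing of the three components identifies $I(\varphi) = \{[1:0:1]\}$. The Jacobian of a lift factors (up to a constant) as $Y(X-Z)^2$, and a direct check shows that $\{Y=0\}$ maps onto a line (hence is not contracted) while $\{X=Z\}$ is collapsed to the single point $[1:0:0]$. Iterating yields the $4$-cycle
\[
[1:0:0] \;\to\; [0:1:-1] \;\to\; [-1:0:1] \;\to\; [0:1:0] \;\to\; [1:0:0],
\]
which avoids $I(\varphi)$, so $\varphi$ is algebraically stable by the Diller--Favre criterion and $\lambda_1(\varphi) = 2$. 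For $\lambda_2(\varphi)$, the system $\varphi[X:Y:Z] = [a:b:c]$ for generic $[a:b:c]$ reduces to a linear equation determining $X/Z$ and then a quadratic in $Y$, giving exactly two preimages; hence $\lambda_2(\varphi) = 2$.

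\textbf{No invariant foliation.} Suppose toward contradiction that some iterate $\varphi^n$ preserves a singular holomorphic foliation $\mathcal{F}$. Since the exceptional curve $\{X = Z\}$ is contracted by $\varphi$ to a point of the $4$-cycle above, each cycle point must be a singularity of $\mathcal{F}$ (a regular foliation point has a one-dimensional leaf through it and cannot be the image of a contracted curve), and $\{X = Z\}$ itself must be an invariant algebraic leaf. A further computation shows that $D\varphi = 0$ at $[0:1:0]$ (since that point lies on the double critical curve $\{X=Z\}$), so the cycle is super-attracting: $D(\varphi^4) \equiv 0$ at $[1:0:0]$. I would then analyze the higher-order jet of $\varphi^4$ at $[1:0:0]$ to extract the possible separatrix directions of $\mathcal{F}$, and show that for every prescribed foliation degree the constraints coming from the four cycle points are mutually incompatible---either the predicted separatrix would have to be a $\varphi$-invariant algebraic curve that does not exist, or the classification of singular germs forces $\mathcal{F}$ to degenerate. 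An alternative, more structural route is to invoke a Cantat--Favre style rigidity result: any invariant foliation of a rational surface map with $\lambda_1 = \lambda_2$ must be an algebraic pencil, after which one classifies totally invariant curves of $\varphi$ and verifies that no such pencil is preserved.

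\textbf{Generic perturbation.} Foliations of degree $d$ on $\mathbb{CP}^2$ are parametrized by a projective space of twisted homogeneous $1$-forms, and the invariance condition $(A \circ \varphi^4)^{*}\omega \wedge \omega \equiv 0$ is polynomial in the pair $(A,\omega)$. Projecting the resulting incidence variety to $\mathrm{PGL}(3,\mathbb{C})$ gives a constructible set $\Sigma_d$ of those $A$ for which $A \circ \varphi^4$ preserves some degree-$d$ foliation. Applying the preceding step with $n=4$ and $A = \mathrm{id}$ shows $\mathrm{id} \notin \Sigma_d$, so each $\Sigma_d$ is a proper constructible subset of $\mathrm{PGL}(3,\mathbb{C})$ and the countable union $\bigcup_{d \geq 0} \Sigma_d$ is Baire meager. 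Since algebraic stability of $A \circ \varphi^4$ (and hence the equality $\lambda_1 = \lambda_2 = 16$ of its dynamical degrees) is a Zariski-open condition on $A$, both conclusions of the theorem hold simultaneously on a residual subset of $\mathrm{PGL}(3,\mathbb{C})$. The main obstacle is the second step: ruling out invariant foliations of arbitrary degree for every iterate cannot be reduced to a finite-degree check, and will require either a delicate jet analysis at the super-attracting cycle combined with a classification of invariant algebraic curves, or a rigidity theorem strong enough to handle all degrees at once.
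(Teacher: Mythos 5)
Your computation of the dynamical degrees is correct and matches the paper's: you verify algebraic stability via the orbit of the contracted curve avoiding the indeterminacy point, and you compute the generic fiber count to get $\lambda_2 = 2$. That part is fine.

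The crucial gap is in the second step, and it runs deeper than the caveat you flag at the end. Your starting premise --- that because $\varphi$ collapses $\{X=Z\}$ to a point of the $4$-cycle, each cycle point must be a singularity of any invariant foliation --- is false. Invariance means $\varphi^*\mathcal{F} = \mathcal{F}$, and the pullback of a foliation under a map that contracts a curve need \emph{not} be singular at the image point. A concrete counterexample: $\varphi(x,y) = (x,xy)$ collapses $\{x=0\}$ to the origin, yet the vertical foliation $\omega = dx$ satisfies $\varphi^*\omega = dx$, so it is $\varphi$-invariant and regular at the origin. The paper is careful about exactly this point: its Lemma \ref{PREIMAGE_SINGULAR} states that a singular value forces a singular preimage only when the map is \emph{finite} at that preimage, and it gives the blow-down as the explicit cautionary example. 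So the claim that "a regular foliation point ... cannot be the image of a contracted curve" has no justification, and the jet analysis you propose to carry out at the superattracting cycle has no foundation to stand on. The "Cantat--Favre rigidity" alternative is also not available off the shelf: there is no general theorem saying that an invariant foliation of a rational self-map of $\mathbb{P}^2$ with $\lambda_1 = \lambda_2$ must be a pencil.

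The mechanism the paper actually uses is entirely different and worth understanding, because it is the genuine new idea. It looks not at the contracted curve but at the \emph{indeterminacy} point $p$: resolving $p$ for $\varphi^4$ shows that $p$ is blown up to an irreducible curve $C_4$ that is \emph{singular} at a point $s$ (Lemma \ref{C_SINGULARITY}). By Lemma \ref{LEM_LEAF_OR_TRANSV}, any foliation $\mathcal{F}$ is either generically transverse to $C_4$ --- in which case Lemma \ref{CURVE_TRANS_FOLIATION} forces $(\varphi^4)^*\mathcal{F}$ to be singular at $p$ --- or has $C_4$ as a leaf, in which case the singular point $s$ of $C_4$ must be a singularity of $\mathcal{F}$, which propagates back (via Lemma \ref{PREIMAGE_SINGULAR}, now legitimately applied because $\varphi^4$ is finite there) to a preimage $r$. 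Either way there is an explicit point along $L_Y$ at which $(\varphi^4)^*\mathcal{F}$ is singular, and iterating backward along an infinite $\varphi^4$-preorbit in $L_Y$ that avoids the finitely many non-finite points produces infinitely many singularities of $\mathcal{F}$, a contradiction. Without the singularity of the blown-up curve there is no way to start this chain, and without the finiteness hypothesis there is no way to keep it going.

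Your step three also differs from the paper's, and it inherits the gap: the constructibility-and-Baire argument for $\Sigma_d$ needs $\mathrm{id} \notin \Sigma_d$ for each $d$, which requires step two to be complete. Beyond that, even granting step two, your $\Sigma_d$ argument only excludes invariant foliations for $A\circ\varphi^4$ itself; the theorem claims no \emph{iterate} of $A\circ\varphi^4$ preserves a foliation, and your countable union would need to range over both degree and the power of the iterate, with each stratum shown proper. The paper sidesteps this by isolating in Proposition \ref{PROP:CONDITIONS} the three geometric conditions (indeterminacy point blown up to a singular curve; infinite finite-preorbit of the indeterminacy point; infinite finite-preorbit of the curve's singularity) that already rule out invariant foliations for \emph{every} iterate, and then shows that each condition is cut out by countably many Zariski-open conditions on $A$, each nonempty because $A = \mathrm{id}$ works. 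That structure is what lets the Baire argument deliver the full strength of the statement.
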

\noindent
Since preservation of a fibration is a stronger condition than preservation of a singular foliation, $\varphi$ provides an answer to the  question posed by Guedj.

After reading a preliminary version of this paper, Charles Favre asked if the
same behavior can be found for a polynomial map of $\mathbb{C}^2$.  In \cite[\S
7.2]{FAVRE_JONSSON}, there is a list of seven types of non-cohomologically
hyperbolic polynomial mappings. Our method of proof does not apply in most of these examples for trivial reasons, but we found a map of type (3) for which the same result holds:
\begin{theo}\label{THM2}
The polynomial map 
\begin{equation}
\psi(x,y) := \big(x(x-y)+2,(x+y)(x-y)+1\big)
\end{equation}
extends as a rational map $\psi: \mathbb{CP}^2 \dashrightarrow \mathbb{CP}^2$ that is not cohomologically hyperbolic ($\lambda_1(\psi) = \lambda_2(\psi)= 2$), and no iterate of $\psi$ preserves a singular holomorphic foliation on $\mathbb{CP}^2$.
\end{theo}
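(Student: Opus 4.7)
The plan is to follow the same strategy used for $\varphi$ in Theorem~\ref{THM:MAIN}, exploiting the close structural similarities between $\psi$ and $\varphi$. First, I would compute the dynamical degrees. Homogenizing, $\psi[X:Y:Z] = [X^2 - XY + 2Z^2 : X^2 - Y^2 + Z^2 : Z^2]$, a degree $2$ map whose unique indeterminacy point is $p = [1:1:0]$. Counting preimages of a generic affine point by elimination yields exactly two solutions, so $\lambda_2(\psi) = 2$. For $\lambda_1$, I would blow up $p$: a local chart computation shows the exceptional divisor $E_p$ is mapped regularly to $[1:2:0]\in L_\infty$, and since $\psi$ restricted to $L_\infty$ acts as the parabolic translation $[1:k:0]\mapsto [1:k+1:0]$, the forward orbit of $E_p$ never returns to $p$. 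Hence the lift is algebraically stable on a single blowup, and $\lambda_1(\psi) = \deg\psi = 2$.

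Next, I would suppose for contradiction that some iterate $\psi^n$ preserves a singular holomorphic foliation $\mathcal{F}$ on $\mathbb{CP}^2$. The Jacobian of $\psi$ in affine coordinates is $2(x-y)^2$, so the critical curve is the line $E=\{X=Y\}$, which $\psi$ contracts to the point $q_0 = (2,1)$. The standard contracted-curve argument forces $E$ to be a leaf of $\mathcal{F}$, since otherwise the pullback foliation would have a positive-dimensional singular locus. Applying the same reasoning to $\psi^{nk}$: on the blowup, $\tilde\psi^{nk}$ contracts $E_p$ to the point $[1:nk+1:0]$, and these points are all distinct as $k$ varies, so either $E_p$ is a leaf of the lifted foliation or infinitely many points of $L_\infty$ are singular for $\mathcal{F}$, which is impossible. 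A direct calculation also shows that the affine conic $\{y(x-y)=1\}$ is mapped onto $E$ by $\psi$ (on this conic one computes $\psi(x,y) = (x/y+2,\, x/y+2)$), hence contracted by $\psi^2$, and must likewise be a leaf of $\mathcal{F}$.

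The main step is then to derive a contradiction from these leaf constraints. I would iterate the preimage construction: each $\psi^{-k}(E)$ contains new irreducible algebraic components—for instance $\psi^{-1}(E)=\{Y^2-XY+Z^2=0\}$ is a smooth conic, and $\psi^{-2}(E) = E\cup C_2$ where $C_2 = \{Y(X^2-Y^2) = XZ^2\}$ is a smooth irreducible cubic—each of which is contracted by some iterate of $\psi$ and hence must be a leaf of $\mathcal{F}$. By the Darboux--Jouanolou theorem, if infinitely many distinct algebraic leaves arise in this way then $\mathcal{F}$ admits a rational first integral, so $\psi^n$ preserves the associated pencil. The argument would conclude by ruling out any such pencil, using the relation $\lambda_1\cdot\lambda_F=\lambda_2=2$ for fibration-preserving surface maps from~\cite{DN} together with the parabolic behavior of $\psi|_{L_\infty}$ (which severely limits how the base $\mathbb{CP}^1$ of the pencil can be permuted). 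The most delicate part is confirming that the iterated preimages of $E$ produce genuinely distinct irreducible components at every step, and extracting the final contradiction from the very restricted remaining pencil structure.
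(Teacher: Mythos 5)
Your computation of the dynamical degrees matches the paper's, but for the non-existence of an invariant foliation your strategy is genuinely different from — and, as written, weaker than — the one used in the paper. The paper proves Theorem~\ref{THM2} by verifying the hypotheses of Proposition~\ref{PROP:CONDITIONS}: after two blow-ups at $p=[1:1:0]$, the lifted map $\widetilde{\psi^4}$ sends $E_2$ onto a curve $C_4=\psi^3(C_1)$ which is checked (via a parametrization and a Gr\"obner basis) to be singular at $s=(4,4)$; then one shows $p$ and $s$ each have an infinite preorbit along which $\psi$ is a finite holomorphic map, and Lemmas~\ref{PREIMAGE_SINGULAR}, \ref{LEM_LEAF_OR_TRANSV}, \ref{CURVE_TRANS_FOLIATION} force any invariant foliation to have infinitely many singular points. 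Your proposal instead tries to manufacture infinitely many algebraic leaves from the contracted curve $E=\{X=Y\}$ and its iterated preimages and then invoke Darboux--Jouanolou to obtain a rational first integral, which you would then rule out.

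There is a genuine gap at the very first step of that strategy. The statement ``the contracted curve $E$ must be a leaf of $\mathcal F$'' is only valid when the image point $\psi^n(E)$ (which is $(2,1)$ or $(4,4)$ depending on $n$) is \emph{not} a singular point of $\mathcal F$. If the contraction point is a singular point, the pullback can be perfectly transverse to $E$: e.g.\ pulling back the radial foliation $s\,dt-t\,ds$ under $(x,y)\mapsto(x,xy)$ yields $dy$ after removing the factor $x^2$, and the contracted curve $\{x=0\}$ is transverse to the leaves $\{y=\mathrm{const}\}$. You would therefore need a separate argument covering the case where $(2,1)$ or $(4,4)$ is singular for $\mathcal F$ (for instance by pulling that singularity back along a suitably chosen infinite preorbit), and you do not address this. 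Beyond that, the last two stages of your argument are announced rather than carried out: you do not actually prove that the components of $\psi^{-k}(E)$ furnish infinitely many \emph{distinct} irreducible invariant curves (you flag this yourself), and the concluding step of ruling out an invariant pencil via the formula you quote from~\cite{DN} together with the parabolic action on $L_\infty$ is left entirely to the reader. As it stands the proposal is a plausible outline of an alternative route, but it is not a proof; the paper's route through the singular point of $C_4$ and Proposition~\ref{PROP:CONDITIONS} is more direct precisely because it sidesteps both the case analysis on singular contraction points and the need to count algebraic leaves.
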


Non-cohomologically hyperbolic maps of $3$-dimensional manifolds $X$ arise naturally as certain pseudo-automorphisms that are ``reversible'' on
$H^{1,1}(X)$  \cite{BK2,BK3,PZ}.  For these mappings it follows from Poincar\'e duality that $\lambda_1(\varphi) = \lambda_2(\varphi)$.  Recently, Bedford, Cantat, and Kim \cite{BCK} have found a reversible pseudo-automorphism of an iterated blow-up of $\mathbb{CP}^3$ which does not preserve any invariant foliation.  It also answers the question posed by Guedj.

Many authors have studied meromorphic (and rational) maps that preserve foliations and algebraic webs, including \cite{FP,CF,PS,DJ1,DJ2,Br,DF}.  Since the proof of Theorem \ref{THM:MAIN} is self-contained and provides insight on the mechanism that prevents $\varphi$ from preserving a foliation, we will provide a direct proof rather than appealing to results from these previous papers.

Let us give a brief idea of the proof of Theorem~\ref{THM:MAIN}.  The fourth iterate ${\varphi^4}$ has an indeterminate point $p$ that is blown-up by ${\varphi^4}$ to a singular curve $C$.  Any foliation $\mathcal{F}$ must be either generically transverse to $C$ or have $C$ as a leaf.  This allows us to show that $({\varphi^4})^{\ast}\mathcal{F}$ must be singular at either at $p$ or at some preimage $r$ of the singular point.  Both of these points have infinite ${\varphi^4}$ pre-orbits, at each point of which $\varphi^{4n}$ is a finite map. This generates a sequence of distinct points $\{a_{-n}\}_{n=0}^{\infty}$ at such that $(\varphi^{4n})^{\ast}\mathcal{F}$ is singular at $a_{-n}$.  If $(\varphi^{\ell})^{\ast}\mathcal{F} = \mathcal{F}$ for some $\ell$ this implies that $\mathcal{F}$ is singular at infinitely many points, providing a contradiction.  The same method of proof applies for the polynomial map in Theorem~\ref{THM2}.

\begin{question}
Our proof of Theorem~\ref{THM2} relies on the compactness of $\mathbb{CP}^2$.  Does $\psi$ preserve a foliation on $\mathbb{C}^2$?
\end{question}

If the foliation is algebraic, then it extends to a foliation of $\mathbb{CP}^2$, and the answer follows from Theorem \ref{THM2}.  However, if the foliation does not extend because it has an infinite set of singularities in $\mathbb C^2$ that accumulate to the line at infinity, then we do not yet know how to address the situation.

It would also be interesting to place the maps $\varphi, \psi$ in the context of the conjecture from \cite{GUEDJ_ERGODIC}:

\begin{question}
Do these maps have topological entropy $\log2$?  Can one find a (unique) measure of maximal entropy? As $n \rightarrow \infty$, what is the asymptotic behavior of the periodic points of period $n$, and are the periodic points predominantly saddle-type, repelling, or degenerate?
\end{question}

In \S \ref{SEC:BACKGROUND} we provide background on the transformation of
foliations and fibrations by rational maps.  In \S \ref{SEC:STRUCTURE} we
describe some basic properties of $\varphi$ and we prove that
$\lambda_1(\varphi) = 2 = \lambda_2(\varphi)$, showing that $\varphi$ is not
cohomologically hyperbolic.  In \S\ref{SEC:PROOF1} we prove that no iterate of
$\varphi$ preserves a foliation, and conclude the section with a summary of the
properties of $\varphi$ that make the proof of Theorem \ref{THM:MAIN} work. In
\S\ref{SEC:PROOF2} we prove Theorem~\ref{THM2} by computing that
$\lambda_1(\psi) = 2 = \lambda_2(\psi)$ and verifying that $\psi$ has the
properties listed in \S\ref{SEC:PROOF1}.  In the (last) section \S
\ref{SEC:PROOF3} we complete the proof of Theorem \ref{THM:MAIN}.

\vspace{0.1in}

\noindent {\bf Acknowledgements.} ---
We have benefited greatly from discussions with Eric Bedford, Serge Cantat,
Laura DeMarco, Jeffrey Diller, Tien-Cuong Dinh, and Charles Favre.  We thank the referee for several helpful comments. This work
was supported in part by the National Science Foundation grants DGE-0742475 (to
S.R.K), DMS-1102597 (to R.K.W.R.), DMS-1348589 (to R.K.W.R.), and IUPUI startup
funds (to R.A.P. and R.K.W.R.).

\section{Background} \label{SEC:BACKGROUND} 

To make this paper self-contained, in this section we will present some background about singular holomorphic foliations on complex surfaces and their pullback under rational maps.  More details can be found in \cite{ABATE,Br,CERVEAU,FP, IY,REBELO}.


Recall that a {\em holomorphic foliation} $\mathcal{F}$ on a complex surface $X$ is a partition 
\begin{eqnarray*}
X = \bigsqcup_\alpha L_\alpha
\end{eqnarray*}
of $X$ into the disjoint union of connected subsets $L_\alpha$, that is locally biholomorphically equivalent to $\mathbb{D}^2=\bigsqcup_{|y| < 1} \{|x| < 1\} \times \{y\}$, the standard holomorphic foliation. 
A {\em singular holomorphic foliation} on $X$ is a holomorphic foliation $\mathcal{F}$ on $X\setminus P$, where $P$ is a discrete set of points through which $\mathcal{F}$ does not extend. 

\begin{theo}[Ilyashenko \cite{IL72} and {\cite[Thm. 2.22]{IY}}]\label{THM:EXTENSION} In a neighborhood of each singular point $p \in X$, $\mathcal{F}$ is 
generated as the integral curves of some holomorphic $1$-form.
\end{theo}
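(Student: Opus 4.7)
The plan is to prove the statement by showing that $\mathcal{F}$ can be described near $p$ via a single meromorphic ``slope function'' on a punctured polydisk, and that this slope function extends across the puncture by the codimension-two Hartogs/Levi phenomenon, yielding the desired vector field. First I would choose local coordinates $(x,y)$ on a polydisk neighborhood $U$ of $p$ in which $p$ corresponds to the origin. On $U \setminus \{0\}$ the foliation $\mathcal{F}$ is regular, so at every point $q$ it selects a one-dimensional complex subspace $T_q \mathcal{F} \subset T_q U$. In the coordinates $(x,y)$ this tangent line corresponds to a point $s(q) \in \mathbb{CP}^1$, and the holomorphy of $\mathcal{F}$ makes $s \colon U \setminus \{0\} \to \mathbb{CP}^1$ holomorphic; equivalently, $s$ is a meromorphic function on $U \setminus \{0\}$.

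Next I would invoke Levi's extension theorem (a consequence of Remmert--Stein applied to the closure of the graph of $s$ in $U \times \mathbb{CP}^1$): because $\{0\} \subset U \subset \mathbb{C}^2$ has complex codimension two, every meromorphic function on $U \setminus \{0\}$ extends uniquely to a meromorphic function on all of $U$. Call the extension $\widetilde s \in \mathcal{M}(U)$. After possibly shrinking $U$, I would write $\widetilde s = N/D$ with $N, D \in \mathcal{O}(U)$ coprime in the local ring $\mathcal{O}_{U,0}$, which is a UFD.

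The holomorphic vector field $v := D\,\partial_x + N\,\partial_y$ on $U$ then has the property that at every $q \in U$ where $v(q) \neq 0$, the direction spanned by $v(q)$ is exactly the line of slope $N(q)/D(q) = \widetilde s(q)$. By construction this agrees with the tangent direction to $\mathcal{F}$ at every $q \in U \setminus \{0\}$ where $v(q) \neq 0$, so the integral curves of $v$ on $U \setminus \{0\}$ coincide with the leaves of $\mathcal{F}$ wherever both are defined. Coprimality of $N$ and $D$ forces their common zero locus to have codimension two in $U$, i.e.\ to be a discrete set; and since $\mathcal{F}$ is nonsingular on $U \setminus \{0\}$, $v$ can only vanish at $0$ itself. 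Hence $v$ is a holomorphic vector field on $U$ whose integral curves generate $\mathcal{F}$, as required.

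The only place any care is required is in Step~1, where one must use $\mathbb{CP}^1$-valued slopes rather than $\mathbb{C}$-valued ones so that ``vertical'' directions of $\mathcal{F}$ in the chosen coordinates appear as poles of a meromorphic function rather than essential singularities; this is what allows Levi's theorem to be applied in Step~2, and what permits Step~3 to recover a vector field with only an isolated zero. Apart from this bookkeeping the argument is entirely formal, relying only on the codimension-two extension principle.
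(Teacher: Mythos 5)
The paper does not prove this statement; it is quoted as a known result of Ilyashenko (with a pointer to \cite{IY}), and the accompanying remark explicitly notes that \cite{NISOLI} gives ``an alternate proof of this result using sheaf theory and Hartog's theorem.'' So there is no in-paper proof to compare against, but your blind attempt is correct and is exactly in the spirit of that referenced alternate route: encode the regular foliation on the punctured bidisk as a $\mathbb{CP}^1$-valued slope map, extend across the codimension-two puncture by Levi's extension theorem (Remmert--Stein on the closed graph), and clear denominators using unique factorization in $\mathcal{O}_{U,0}$. Each of these steps is correctly deployed, and the care taken to use $\mathbb{CP}^1$-valued slopes rather than $\mathbb{C}$-valued ones is exactly the right bookkeeping.

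One step is compressed enough to be worth spelling out. You assert that since $\mathcal{F}$ is nonsingular on $U \setminus \{0\}$, the vector field $v = D\,\partial_x + N\,\partial_y$ can only vanish at $0$. This does not follow immediately from nonsingularity alone: a holomorphic vector field can have an isolated zero while its integral curves still extend to a regular foliation (e.g.\ $v=(x^2+y^2)\partial_x$). What saves you is coprimality. After shrinking $U$ so that $Z(N)\cap Z(D)\subset\{0\}$, the germs of $N$ and $D$ at any $q\neq 0$ are again coprime, since a common irreducible factor there would trace out a curve inside $Z(N)\cap Z(D)$, contradicting discreteness. Hence if $v(q)=0$ with $q\neq 0$ then $[D:N]$ is a genuine point of indeterminacy of the $\mathbb{CP}^1$-valued slope map at $q$, so $\mathcal{F}$ cannot be regular at $q$---contradiction. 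With that line added, the proof is complete and correct.
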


This allows for any
 singular holomorphic foliation $\mathcal{F}$ on a surface $X$ to be described by
 an open cover $\{U_i\}$ of $X$ and a system of holomorphic $1$-forms $\omega_i$ on $U_i$ with isolated zeros that satisfy the compatibility condition $\omega_i =  g_{ij} \omega_j$ for some non-vanishing holomorphic functions
\begin{equation*}
g_{ij}: U_i \cap U_j \rightarrow \mathbb{C}.
\end{equation*}
Within a given $U_i$ the tangent direction to a leaf is described by the kernel of~$\omega_i$.  The zeros
of $\omega_i$ correspond to the singular points of~$\mathcal{F}$.

Let $\varphi: X \rightarrow Y$ be a dominant holomorphic map between two
surfaces and let $\mathcal{F}$ be a singular holomorphic foliation on $Y$ given
by $\{(U_i,\omega_i)\}$.  Recall that the {\em pullback} $\varphi^*
\mathcal{F}$ is defined by $\{(\varphi^{-1}(U_i),\hat \omega_i)\}$ where each
form $\hat \omega_i$ is obtained by rescaling $\varphi^* \omega_i$, i.e.,
dividing it by a suitable holomorphic function in order to eliminate any
non-isolated zeros.

Let us now see what what happens in the case that $\varphi: X \dashrightarrow
Y$ is a dominant rational map between two complex projective algebraic
surfaces.  Consider a point $p$ of the indeterminacy set $\mathcal{I}_{\varphi}$ of
$\varphi$.  A sequence of blow-ups $\pi:~\tilde{X}~\rightarrow~X$ resolves the
indeterminacy of $\varphi$ at $p$ if $\varphi$ lifts to a rational map $\tilde{\varphi}: \tilde{X}~\dashrightarrow~Y$ which is holomorphic in an open neighborhood of
$\pi^{-1}(p)$ and makes the following diagram commute
\begin{eqnarray} \label{GENERAL_RESOLUTION}
\xymatrix{\widetilde{X} \ar[d]^\pi  \ar @{-->}[dr]^{\widetilde{\varphi}}  & \\
X \ar @{-->}[r]^\varphi & Y, }
\end{eqnarray}
where $\varphi \circ \pi$ and $\tilde{\varphi}$ are both defined.

It is a well-known fact that one can do a minimal sequence of blow-ups $\pi: \tilde{X} \rightarrow X$  over $\mathcal{I}_\varphi$ so that $\varphi$ lifts to a holomorphic map $\tilde{\varphi}: \tilde{X}~\rightarrow~Y$ resolving all of the points of $\mathcal{I}_\varphi$ (see, for example, \cite[Ch. IV, \S3.3]{SHAF}).  The pull-back of $\mathcal{F}$ under $\varphi$ is defined by $\varphi^* \mathcal{F} = \pi_* \tilde{\varphi}^* \mathcal{F}$.

{\bf Conventions.} 
For the remainder of the paper we will use ``surface'' to mean ``complex projective algebraic
surface'', and ``foliation'' to mean ``singular holomorphic foliation.''
Also, all rational maps will be dominant, meaning that the image is not contained within
a proper subvariety of the codomain.

The following facts will be used later.

\begin{rema}\label{LEM_COMPOSITION}
 Let $\varphi: X \dashrightarrow Y$ and $\psi: Y \dashrightarrow Z$ be any dominant rational maps. For any foliation $\mathcal{F}$ on $Z$ we have $(\psi \circ \varphi)^* \mathcal{F} = \varphi^* (\psi^* \mathcal{F})$.
\end{rema}

\begin{rema}\label{LEM_LEAF_OR_TRANSV} Let $\mathcal{F}$ be a foliation on a surface $X$ and suppose $C\subset X$ is an irreducible algebraic curve.  Then, either $C$ is a leaf of $\mathcal{F}$ or $C$ is transverse to $\mathcal{F}$ away from finitely many points.
\end{rema}

\begin{lemm}\label{PREIMAGE_SINGULAR} 
Let $\varphi: X \dashrightarrow Y$ be a dominant rational map.  Let $p \in X \setminus \mathcal{I}_\varphi$ and let $\mathcal{F}$ be a foliation on $Y$.  If $\varphi(p)$ is a singular point for $\mathcal{F}$ and $\varphi$ is finite at $p$, then $p$ is a singular point for $\varphi^*(\mathcal{F})$.
\end{lemm}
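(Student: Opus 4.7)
The plan is to work in local coordinates and use finiteness of $\varphi$ at $p$ to obstruct the one scenario in which a singularity of $\mathcal{F}$ could disappear under pullback.

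Pick local coordinates $(x_1,x_2)$ on $X$ centered at $p$ and $(y_1,y_2)$ on $Y$ centered at $\varphi(p)$, and represent $\mathcal{F}$ near $\varphi(p)$ by a holomorphic one-form $\omega=a\,dy_1+b\,dy_2$ with $a(0)=b(0)=0$ and $\{a=b=0\}=\{0\}$. A direct computation writes $\varphi^{\ast}\omega=A\,dx_1+B\,dx_2$, and by the definition recalled in the preceding paragraphs of the paper the pullback foliation is generated by $\hat\omega=(A/g)\,dx_1+(B/g)\,dx_2$ with $g=\gcd(A,B)$. Because $a$ and $b$ both vanish at $\varphi(p)$, one reads off $A(p)=B(p)=0$ at once, so if $g(p)\neq 0$ then $\hat\omega(p)=0$ and $p$ is singular for $\varphi^{\ast}\mathcal{F}$; the remaining work is to handle the case $g(p)=0$.

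When $g(p)=0$, the fact that $g$ has no isolated zeros (by construction of $\hat\omega$) implies that its vanishing locus through $p$ is a union of irreducible curves. Let $C$ be one such curve through $p$. Then $\varphi^{\ast}\omega$ vanishes identically on $C$, i.e.\ $(D\varphi_q)^{T}(\omega(\varphi(q)))=0$ for every $q\in C$. For a generic $q\in C$ one of two things must hold: either $\omega(\varphi(q))=0$, which forces $\varphi(C)\subseteq\{\omega=0\}=\{\varphi(p)\}$ and contradicts the finiteness of $\varphi$ at $p$ (as $\varphi^{-1}(\varphi(p))$ would contain the curve $C$); or else $D\varphi_q$ has rank one and $\omega(\varphi(q))$ annihilates its image, which means that $C$ lies in the critical locus of $\varphi$ and that $D:=\varphi(C)$ is a separatrix of $\mathcal{F}$ at $\varphi(p)$.

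In this remaining separatrix subcase, straighten coordinates so that $C=\{x_1=0\}$ and $D=\{y_1=0\}$. Then $\omega=a\,dy_1+y_1\tilde b\,dy_2$ with $a(0,0)=0$, and an analysis of $\det D\varphi$ along $C$---using finiteness once more to preclude $\varphi_2(0,x_2)\equiv0$---yields $\varphi_1=x_1^{k}\xi$ with $k\ge 2$ and $\xi(0,x_2)\not\equiv0$. Expanding $\varphi^{\ast}\omega$ shows that it factors as exactly $x_1^{k-1}$ times a one-form whose $dx_1$-coefficient at $p$ equals $a(0,0)\cdot k\,\xi(0,0)=0$ and whose $dx_2$-coefficient carries an explicit $x_1$ factor. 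Any further irreducible component of $\{g=0\}$ through $p$ is handled by the same dichotomy, so after dividing out all of $g$ one still has $\hat\omega(p)=0$. The main obstacle is precisely this separatrix subcase; what makes it work is that finiteness forces the strict inequality $k-1<k$, and the residual vanishing from $a(0,0)=0$ is exactly enough to survive division by $x_1^{k-1}$ and preserve the singularity of $\varphi^{\ast}\mathcal{F}$ at $p$.
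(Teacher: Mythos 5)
Your overall strategy — compute $\varphi^{\ast}\omega$ in coordinates, identify $g=\gcd(A,B)$, and show that even after dividing by $g$ the form still vanishes at $p$ — is a reasonable plan and the case $g(p)\neq 0$ and the ``separatrix dichotomy'' are set up correctly. But the case $g(p)=0$ is where the real work is, and there you have two genuine gaps.

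First, the step ``straighten coordinates so that $C=\{x_1=0\}$ and $D=\{y_1=0\}$'' requires both $C$ (a branch of $\{g=0\}$ through $p$) and $D=\varphi(C)$ to be \emph{smooth} germs at $p$ and $\varphi(p)$, respectively. An irreducible factor of $g$ can define a singular branch (e.g.\ $g=x_1^2-x_2^3$, a cusp), and even when $C$ is smooth its image $D$ can be singular at $\varphi(p)$. Your explicit computation $\varphi_1=x_1^k\xi$, the bound $k\ge 2$ from $\det D\varphi$, and the exact factor $x_1^{k-1}$ all depend on this normal form and do not carry over to singular branches.

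Second, the sentence ``Any further irreducible component of $\{g=0\}$ through $p$ is handled by the same dichotomy, so after dividing out all of $g$ one still has $\hat\omega(p)=0$'' asserts precisely what needs to be proved. Your computation establishes that $A'=A/x_1^{k-1}$ satisfies $A'(p)=0$ and $B'=B/x_1^{k-1}$ is divisible by $x_1$. But if $g=x_1^{k-1}h$ with $h(p)=0$ and $h$ coprime to $x_1$ (which happens whenever $\{g=0\}$ has a second branch through $p$), then the $dx_1$-coefficient of $\hat\omega$ at $p$ is $(A'/h)(p)$, and knowing $A'(p)=0$ and $h(p)=0$ tells you nothing about this quotient: the vanishing order of $A'$ along the second branch could exactly match that of $h$. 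You would need to re-run the entire separatrix analysis with all branches and all their image separatrices simultaneously, and show that the combined divisibility still leaves residual vanishing at $p$ — this is not a routine iteration of what you did for one branch.

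For comparison, the paper's proof argues by contraposition and avoids both issues: it assumes $\varphi^{\ast}\mathcal{F}$ is nonsingular at $p$, normalizes $\hat\omega=dx_1$, deduces that the $dx_2$-coefficient of $\varphi^{\ast}\omega$ vanishes identically, and then derives a contradiction from a vanishing-order inequality on $\partial\varphi_1/\partial x_2$ (reinforced by a precomposition with $(x_1,x_2)\mapsto(x_1^k,x_2)$). That route never needs to straighten branches of $\{g=0\}$ or track its components, and it uses finiteness only through the fact that $a\circ\varphi$ and $b\circ\varphi$ have an isolated common zero. If you want to keep your decomposition approach, you must at minimum (i) replace the straightening of $C$ and $D$ by an argument that works for singular branches, perhaps via Puiseux parametrizations, and (ii) actually carry out the multi-branch bookkeeping rather than asserting it.
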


\begin{lemm}\label{CURVE_TRANS_FOLIATION}Let $\varphi: X \dashrightarrow Y$ be a dominant rational map, let $p \in \mathcal{I}_\varphi$, and suppose $\pi: \widetilde{X} \rightarrow X$ and $\widetilde{\varphi}: \widetilde{X}\dashrightarrow Y$ give a resolution of the indeterminacy at~$p$.
Then, if $\mathcal{F}$ is a foliation on $Y$ and one of the irreducible components of $\widetilde{\varphi}(\pi^{-1}(p))$ is generically transverse to $\mathcal{F}$, the pullback $\varphi^*\mathcal{F}$ is singular at $p$.
\end{lemm}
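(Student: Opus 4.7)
The plan is to argue by contradiction: I will assume $\varphi^* \mathcal{F}$ is non-singular at $p$ and derive a contradiction with the transversality hypothesis. The central identity is Lemma \ref{LEM_COMPOSITION}, applied to $\widetilde{\varphi} = \varphi \circ \pi$, which gives $\widetilde{\varphi}^*\mathcal{F} = \pi^*(\varphi^*\mathcal{F})$. Thus the local structure of $\varphi^*\mathcal{F}$ near $p$ completely determines the structure of $\widetilde{\varphi}^*\mathcal{F}$ along the exceptional fiber $\pi^{-1}(p)$.

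First I would unpack the non-singularity assumption. If $\varphi^*\mathcal{F}$ is smooth at $p$, by Theorem \ref{THM:EXTENSION} I can choose a holomorphic submersion $y$ defined near $p$ with $y(p) = 0$ such that $\varphi^*\mathcal{F}$ is defined by $dy = 0$, and the local leaf $L$ through $p$ is $\{y = 0\}$. The total transform $\pi^{-1}(L)$ is then a curve that contains the entire exceptional fiber $\pi^{-1}(p)$, and after a possible rescaling needed to clear non-isolated zeros along divisors contracted by $\pi$, the foliation $\widetilde{\varphi}^*\mathcal{F}$ is defined by a 1-form whose leaves are (components of) the level sets of $y \circ \pi$. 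Since $y \circ \pi$ vanishes identically on every irreducible component $E_i$ of $\pi^{-1}(p)$, each such $E_i$ is contained in the level set $\{y \circ \pi = 0\}$, hence at every non-singular point of $\widetilde{\varphi}^*\mathcal{F}$ lying on $E_i$ the tangent direction to $E_i$ must coincide with the leaf direction. In other words, no irreducible component of $\pi^{-1}(p)$ can be generically transverse to $\widetilde{\varphi}^*\mathcal{F}$.

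Second I would exploit the transversality hypothesis to contradict this. Let $C$ be the irreducible component of $\widetilde{\varphi}(\pi^{-1}(p))$ that is generically transverse to $\mathcal{F}$; then $C$ is a curve, and I pick an irreducible component $E_0$ of $\pi^{-1}(p)$ with $\widetilde{\varphi}(E_0) = C$. At a generic point $q \in E_0$, three conditions can be arranged simultaneously: the restriction $\widetilde{\varphi}|_{E_0}$ has nonzero derivative at $q$ (so $d\widetilde{\varphi}_q(T_q E_0) = T_{\widetilde{\varphi}(q)} C$), the point $\widetilde{\varphi}(q)$ is a smooth point of $\mathcal{F}$, and $\mathcal{F}$ is transverse to $C$ at $\widetilde{\varphi}(q)$. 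Taking a local defining 1-form $\omega$ for $\mathcal{F}$ near $\widetilde{\varphi}(q)$, transversality gives $\omega|_{T_{\widetilde{\varphi}(q)} C} \neq 0$, and so
\[
(\widetilde{\varphi}^*\omega)_q(v) \;=\; \omega_{\widetilde{\varphi}(q)}\bigl(d\widetilde{\varphi}_q(v)\bigr) \;\neq\; 0 \qquad \text{for every nonzero } v \in T_q E_0.
\]
This shows $E_0$ is transverse to $\widetilde{\varphi}^*\mathcal{F}$ at $q$, which contradicts the previous paragraph and completes the proof.

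The main subtlety I anticipate is that $\widetilde{\varphi}^*\mathcal{F}$ is defined only after rescaling $\widetilde{\varphi}^*\omega$ by some holomorphic function $h$ to remove non-isolated zeros. But since $\widetilde{\varphi}^*\omega$ is nonzero on $T_q E_0$, it does not vanish at $q$ as a 1-form, so its zero divisor does not contain $E_0$, and consequently $h$ is nonzero at such generic $q$; dividing by a nonzero holomorphic factor preserves the non-vanishing on $T_q E_0$. Apart from this verification, the proof is essentially a local coordinate computation combined with the functorial identity from Lemma \ref{LEM_COMPOSITION}.
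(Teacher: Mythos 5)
Your proof is correct, and it is essentially the contrapositive of the argument in the paper, organized around a different construction. Both proofs share the central step: starting from an irreducible component $E_0 \subset \pi^{-1}(p)$ with $\widetilde{\varphi}(E_0)=C$ generically transverse to $\mathcal{F}$, one shows that $\widetilde{\varphi}^*\mathcal{F}$ is generically transverse to $E_0$ by pulling back a defining one-form through the isomorphism $D\widetilde{\varphi}_q : T_qE_0 \to T_{\widetilde{\varphi}(q)}C$ at generic $q$. The paper then works on the \emph{pushforward} side: it picks two disjoint leaf-disks of $\widetilde{\varphi}^*\mathcal{F}$ hitting $E_0$ transversally at distinct points and observes that their $\pi$-images are two distinct integral curves of $\varphi^*\mathcal{F} = \pi_*\widetilde{\varphi}^*\mathcal{F}$ through $p$, which forces $p$ to be singular. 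You instead work on the \emph{pullback} side: you invoke $\widetilde{\varphi}^*\mathcal{F} = \pi^*(\varphi^*\mathcal{F})$ (Lemma~\ref{LEM_COMPOSITION}), assume $\varphi^*\mathcal{F}$ regular at $p$ with first integral $y$, and deduce that each exceptional component of $\pi^{-1}(p)$ must be tangent to $\pi^*(\varphi^*\mathcal{F})$, contradicting the transversality of $E_0$. The two arguments are duals of the same local fact (a regular foliation has exactly one leaf through $p$); your version has the slight advantage of never needing to verify directly that ``two integral curves through $p$'' implies singularity, since you only ever use the clean local normal form on the regular side.

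One step is stated a bit loosely and deserves tightening: from ``$E_i \subset \{y\circ\pi = 0\}$'' you conclude that $E_i$ is tangent to $\widetilde{\varphi}^*\mathcal{F}$ at its regular points, but containment of $E_i$ in a level set of the first integral $y\circ\pi$ does not by itself preclude leaves crossing $E_i$ transversally (those leaves would also lie in $\{y\circ\pi=0\}$). The fix is short: $\{y\circ\pi=0\}$ is the total transform $\pi^{-1}(p) \cup \widehat{L}$ of the leaf $L=\{y=0\}$, so near a \emph{generic} $q\in E_i$ (away from other exceptional components and from $\widehat{L}$) the level set $\{y\circ\pi=0\}$ is locally just $E_i$; since any leaf of $\widetilde{\varphi}^*\mathcal{F}$ through $q$ is contained in that level set, it must coincide locally with $E_i$, so $E_i$ is indeed a leaf near $q$. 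Your ``main subtlety'' paragraph handles the rescaling issue only on the transversality side; the analogous rescaling on this side (dividing $d(y\circ\pi)$ by its GCD) is what the argument above sidesteps by reasoning with first integrals instead. With that sentence added, the proof is complete.
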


Recall that a {\em fibration} is a dominant rational map $\rho: X \dashrightarrow S$ to a non-singular algebraic curve $S$ with the property that each  fiber is connected.  The level curves of $\rho$ define a foliation on $X \setminus \mathcal{I}_\rho$ having finitely many singularities.  It thus extends as a singular foliation $\mathcal{F}$ on $X$ called the {\em foliation induced by $\rho$}.

\begin{lemm}\label{FIBRATION} Let $\rho: X \dashrightarrow S$ be a fibration inducing a foliation $\mathcal{F}$ on $X$.  A rational map $\varphi: X \dashrightarrow X$ preserves  $\mathcal{F}$  if and only if $\rho$ semi-conjugates $\varphi$ to a holomorphic map of $S$:
\begin{eqnarray}\label{EQN:CD}
\xymatrix{
X \ar @{-->}[r]^\varphi \ar @{-->}[d]^\rho & X \ar @{-->}[d]^\rho \\
S \ar[r]^\eta & S.
}
\end{eqnarray}
\end{lemm}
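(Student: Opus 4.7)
The plan is to prove the two implications separately. For the \emph{if} direction, assume the diagram commutes with some holomorphic $\eta : S \to S$. At any point $x$ where $\varphi$, $\rho$, and $\rho \circ \varphi = \eta \circ \rho$ are all holomorphic and where $\rho$ is a submersion at both $x$ and $\varphi(x)$, the leaf of $\mathcal{F}$ through $x$ is locally the fiber $\rho^{-1}(\rho(x))$, and its image under $\varphi$ is contained in $\rho^{-1}(\eta(\rho(x)))$, which is locally a leaf of $\mathcal{F}$. Hence $\varphi^* \mathcal{F}$ and $\mathcal{F}$ agree on the Zariski open set where all these conditions hold. The complement is an analytic curve (critical and indeterminacy loci of $\rho$, indeterminacy of $\varphi$, and the preimage of the above under $\varphi$), so Lemma \ref{LEM:EQUAL} gives $\varphi^* \mathcal{F} = \mathcal{F}$.

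For the \emph{only if} direction, assume $\varphi^* \mathcal{F} = \mathcal{F}$. The key observation is that since fibers of $\rho$ are connected (by definition of a fibration), a generic fiber coincides with a single leaf of $\mathcal{F}$, not merely a union of leaves. Pick a Zariski open set $U \subset X$ on which $\rho$, $\varphi$, and $\rho \circ \varphi$ are holomorphic, $\rho$ is submersive at every point of $U \cup \varphi(U)$, and the leaves of $\mathcal{F}$ in $U$ and $\varphi(U)$ are (restrictions of) fibers of $\rho$. Because $\varphi$ sends leaves of $\mathcal{F}$ to leaves of $\mathcal{F}$, the composition $\rho \circ \varphi$ is locally constant along the level sets of $\rho|_U$. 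By analytic continuation and the connectedness of fibers, this local constancy spreads to constancy along each generic fiber, so $\rho \circ \varphi$ factors through $\rho$ as a well-defined rational map $\eta : S \dashrightarrow S$ satisfying $\rho \circ \varphi = \eta \circ \rho$ as rational maps.

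To conclude, I would invoke the standard fact that any rational map from a smooth projective curve to a projective variety extends to a holomorphic map on all of the source; this upgrades $\eta$ to a holomorphic map $\eta : S \to S$, completing the equivalence. The one mildly delicate point is the passage from ``$\rho \circ \varphi$ is locally constant on fibers of $\rho$'' to ``$\rho \circ \varphi$ is globally constant on each fiber of $\rho$,'' which is exactly where the connectedness built into the definition of a fibration is needed; without it, $\eta$ could be multi-valued over $S$ and one would only recover a semi-conjugation to a correspondence rather than to a map.
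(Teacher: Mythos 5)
Your proof is correct and follows essentially the same route as the paper: in the \emph{if} direction both use the leaf/fiber correspondence on a Zariski-open set and conclude via Lemma~\ref{LEM:EQUAL}, and in the \emph{only if} direction both use connectedness of the fibers to get a well-defined $\eta$ and to rule out multivaluedness. The one place where you diverge from the paper is the final upgrade of $\eta$ to a holomorphic map: you construct $\eta$ as a rational self-map of $S$ and then invoke the general fact that a rational map from a smooth (projective) curve is a morphism, while the paper does this by hand --- it picks a small holomorphic disc $\mathcal{D}$ transverse to a fiber, uses the local inverse $\chi:\rho(\mathcal{D})\to\mathcal{D}$ of $\rho$, and observes that $\eta=\rho\circ\varphi\circ\chi$ is meromorphic, hence holomorphic, on the one-dimensional domain $\rho(\mathcal{D})$. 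The paper's version has the small advantage of simultaneously making the rationality of $\eta$ explicit (your phrase ``$\rho\circ\varphi$ factors through $\rho$ as a well-defined rational map'' is where one really wants the local section $\chi$ to justify that the factored map is itself rational), while your version is shorter if one is willing to quote the curve-extension theorem.
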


\begin{rema}
Using Lemma \ref{FIBRATION}, it follows immediately from Theorem \ref{THM:MAIN} that no iterate of $\varphi$ preserves a fibration.
\end{rema}

Since Lemma \ref{FIBRATION} is well known, we omit the proof.  However, we include proofs of Lemmas \ref{PREIMAGE_SINGULAR} and \ref{CURVE_TRANS_FOLIATION} below.

\begin{proof}[Proof of Lemma \ref{PREIMAGE_SINGULAR}]
Suppose that $\varphi^* \mathcal{F}$ is non-singular at $p$.  Then, we can choose local coordinates $(x_1,x_2)$ centered at $p$ so that $\varphi^* \mathcal{F}$ is given by $\hat \omega = dx_1$.

Let $(y_1,y_2)$ be local coordinates centered at $\varphi(p)$.  In these coordinates, we have $\varphi: (\mathbb{C},(0,0)) \rightarrow (\mathbb{C},(0,0))$ where $\varphi(x_1,x_2) := (\varphi_1(x_1,x_2),\varphi_2(x_1,x_2))$.
Suppose $\mathcal F$ is given by
\begin{eqnarray*}
\omega = a(y_1,y_2)dy_1 + b(y_1,y_2) dy_2
\end{eqnarray*}
with $a$ and $b$ having an isolated common zero at $(0,0)$. 

Since $\hat \omega = dx_1$, the $dx_2$ coefficient of $\varphi^* \omega$ must vanish identically:
\begin{eqnarray}
(a\circ \varphi)(x_1,x_2) \frac{\partial \varphi_1}{\partial x_2} + (b\circ \varphi)(x_1,x_2) \frac{\partial \varphi_2}{\partial x_2} \equiv 0.
\end{eqnarray}
We will show that this is impossible.

Since $\varphi$ is dominant, neither $(a\circ \varphi)$ nor $(b\circ \varphi)$ can be identically zero.  Thus,
$\frac{\partial \varphi_1}{\partial x_2} \equiv 0$ if and only if $\frac{\partial
\varphi_2}{\partial x_2} \equiv 0$, and hence, neither can vanish identically
since $\varphi$ is dominant.  Moreover, since $\varphi$ is finite, $(a\circ \varphi)$
and $(b\circ \varphi)$ have an isolated common zero at $(x_1,x_2) = (0,0)$.  This implies that $\frac{\partial \varphi_1}{\partial x_2}$ vanishes along the same curves as $b\circ \varphi$ with at least the same multiplicity (and similarly for $\frac{\partial \varphi_2}{\partial x_2}$ along the same curves as $a\circ \varphi$).  Therefore, if $\van_q(f)$ denotes the order of vanishing of a holomorphic function $f$ at point $q$, we have:
\begin{align}\label{VANISHING1}
\van_{(0,0)}\left(\frac{\partial \varphi_1}{\partial x_2}\right)\geq\van_{(0,0)}\left(b \circ \varphi\right)\geq\min\left\{\van_{(0,0)}(\varphi_1),\van_{(0,0)}(\varphi_2)\right\},
\end{align}
and a similar inequality holds for $\van_{(0,0)}\left(\frac{\partial \varphi_2}{\partial x_2}\right)$.

Note that $\varphi_1$ and $\varphi_2$ do not both vanish on $\{x_1=0\}$ because $\varphi$ does not collapse curves.  
Therefore, at least one of $\varphi_1$ and $\varphi_2$ has a monomial consisting of a positive power of $x_2$ in its power series.
Let $\ell \geq 1$ be the minimal exponent of an $x_2^\ell$ monomial in either series, and without loss of generality assume it
is present in the power series of $\varphi_1$.

Let $k > \ell$ and consider $\psi\colon(\mathbb C^2,(0,0)) \rightarrow (\mathbb C^2,(0,0))$ given by
\begin{equation*}
\psi(x_1,x_2):=\varphi(x_1^k,x_2).
\end{equation*}
Since the vertical foliation given by $dx_1 = 0$ is preserved under $(x_1,x_2)~\mapsto~(x_1^k,x_2)$, we also have that $\psi^* \mathcal{F}$ is the vertical foliation. Moreover, since $\psi$ is a composition of finite, dominant maps, (\ref{VANISHING1}) holds for $\psi$.  On the other hand, $k$ was chosen so that
\begin{eqnarray*}
\min\left\{\van_{(0,0)}(\psi_1),\van_{(0,0)}(\psi_2)\right\} = \van_{(0,0)}(\psi_1) = \ell,
\end{eqnarray*}
with $x_2^\ell$ the only term of degree $\ell$ in the expansion for $\psi_1$.  In particular,
\begin{equation*}
\van_{(0,0)}\left(\dfrac{\partial \psi_1}{\partial x_2}\right) = \ell-1 < \ell =\van_{(0,0)}(\psi_1),
\end{equation*}
which contradicts that (\ref{VANISHING1}) applies to $\psi$.
\end{proof}

\begin{proof}[Proof of Lemma \ref{CURVE_TRANS_FOLIATION}]
Let $C$ be an irreducible component of $\widetilde{\varphi}(\pi^{-1}(p))$ that is generically transverse to $\mathcal{F}$ and suppose $\mathcal{F}$ is represented by a compatible system of $1$-forms $\{(U_i,\omega_i)\}$. 
Then, the tangent spaces to generic points of $C$ are not in the kernels of any of the $1$-forms based at these points.  

 Let $E \subset \pi^{-1}(p)$ be an irreducible component that is mapped by $\widetilde{\varphi}$ onto $C$.  Then, at generic $x \in E$ we have that $x$ and $\widetilde{\varphi}(x)$ are smooth points of $E$ and $C$, respectively, and $D\widetilde{\varphi}$ is an isomorphism between their tangent spaces. This implies that $\{(\widetilde{\varphi}^{-1}(U_i),\widetilde{\varphi}^* \omega_i)\}$ are non-vanishing at generic points of $E$ and have kernel transverse to $E$ at these points.  Thus, $\widetilde{\varphi}^* \mathcal{F}$ is generically transverse to~$E$.

Now we can choose two disjoint holomorphic disks $\gamma_1$ and $\gamma_2$ within leaves of $\widetilde{\varphi}^* \mathcal{F}$ that intersect $\pi^{-1}(p)$ transversally at two distinct points of $x_1, x_2 \in E$.  Then, $\pi(\gamma_1)$ and $\pi(\gamma_2)$ are distinct integral curves of $\varphi^* \mathcal{F} = \pi_* \widetilde{\varphi}^* \mathcal{F}$ going through $p$.  This implies that $p$ is a singular point for $\varphi^* \mathcal{F}$.
\end{proof}

\section{Structure of $\varphi$}\label{SEC:STRUCTURE}

Recall that $\varphi : \mathbb{CP}^2 \dashrightarrow \mathbb{CP}^2$ is given by
\begin{equation*}
\varphi[X:Y:Z]\ =\ [-Y^2:X(X-Z):-(X+Z)(X-Z)].
\end{equation*}

\subsection{Indeterminate and Postcritical Sets}
Solving for the points where all three homogeneous coordinates of $\varphi$ are zero, we find that the only indeterminate point of $\varphi$ is
\begin{equation}
p:=[1:0:1].
\end{equation}

We have $|D\varphi|=-4Y(X-Z)^2$, so the curves
\begin{eqnarray}
L_{\rm coll}&:=&\{X=Z\}\mbox{ and }\\
L_Y&:=&\{Y=0\}
\end{eqnarray}
are critical.
Let $L_X:=\{X=0\}$, and observe that
\begin{equation}\label{FOURCYCLE}
\xymatrix{
L_{\rm coll} \setminus \{p\}  \ar[r] & [1:0:0] \ar[r] & [0:1:-1]  \ar[r] & [-1:0:1]  \ar[r] & [0:1:0] \ar@(dl,dr)[lll]
}
\end{equation}\\
We will refer to the points in this four cycle as $q_1,\ldots,q_4$, respectively. Meanwhile, 
\begin{equation*}
\xymatrix{
L_Y  \ar[r] &   L_X \ar@(dr,dl)[l],
}
\end{equation*}\\
as illustrated in Figure \ref{LINES}.

\begin{figure}
\centering
\scalebox{.7}{
\input{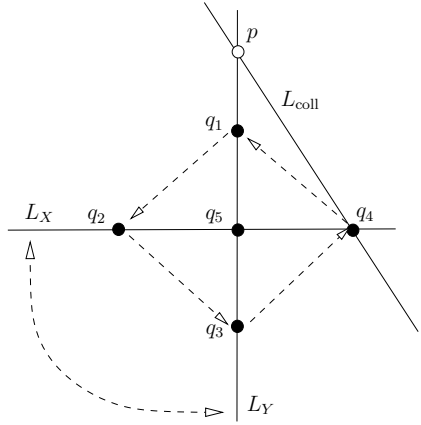}
}
\caption{\label{LINES}Postcritical set for $\varphi$}
\end{figure}

\subsection{Resonant Dynamical Degrees}
Note that the orbit of the collapsed curve $L_{\rm coll}$ lies in the four cycle (\ref{FOURCYCLE}), which is disjoint from $p$. It follows that there is no curve $V$ such that $\varphi(V)\subset\mathcal I_\varphi$, so $\varphi$ is algebraically stable by \cite[Prop. 1.4.3]{SIBONY}. Therefore, $\lambda_1(\varphi) = \deg_{\rm alg}(\varphi)=2$.  

We will show in the proof of Lemma \ref{C_SINGULARITY} that $p$ can be resolved by two blow-ups with the image of the exceptional divisors being the line $\{Y=-2Z\}$. In particular, $[1:1:-1]$ is neither a critical value nor in the image of the indeterminacy, so it is a generic point for $\varphi$.
Its two preimages under $\varphi$ are $[1:\pm i:0]$, so $\lambda_2(\varphi) = 2$. This establishes the following resonance of dynamical degrees:
\begin{lemm}\label{LEM:RESONANT_DEGREES}
The dynamical degrees coincide: $\lambda_1(\varphi)=\lambda_2(\varphi) = 2$ so that $\varphi$ is not cohomologically hyperbolic.
\end{lemm}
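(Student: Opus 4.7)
The plan is to treat $\lambda_1$ and $\lambda_2$ separately, invoking two different standard tools.

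For $\lambda_1(\varphi)$, I would use algebraic stability. Since $\varphi$ is given by quadratic homogeneous polynomials with only the isolated common zero $p=[1:0:1]$, the algebraic degree is $\deg_{\rm alg}(\varphi)=2$. By \cite[Prop.\ 1.4.3]{SIBONY}, it suffices to verify that no curve is eventually collapsed into the indeterminacy set $\mathcal I_\varphi = \{p\}$; then $(\varphi^n)^*$ on $H^{1,1}(\mathbb{CP}^2)$ equals $(\varphi^*)^n$ and $\lambda_1(\varphi)$ is exactly $\deg_{\rm alg}(\varphi) = 2$. The only curves that can map to a point are the critical curves, and from the description of the critical set, these are $L_{\rm coll}$ and $L_Y$. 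From the orbit diagram (\ref{FOURCYCLE}), the forward orbit of $L_{\rm coll}$ is the four-cycle $\{q_1,q_2,q_3,q_4\}$, and $L_Y$ maps into $L_X$, which is then sent into the same four-cycle under further iteration. In particular none of these points equals $p=[1:0:1]$, so no iterate $\varphi^n$ sends a curve into $\mathcal I_\varphi$. This gives algebraic stability and hence $\lambda_1(\varphi)=2$.

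For $\lambda_2(\varphi)$, the idea is to identify it with the topological degree, i.e.\ the number of $\varphi$-preimages of a generic point. This identification holds for any dominant rational self-map of $\mathbb{CP}^2$. I would then exhibit an explicit generic point and count preimages. A natural candidate is $q=[1:1:-1]$: solving $\varphi[X{:}Y{:}Z]=q$ reduces to a pair of quadratic equations in two affine variables, whose solutions I compute to be $[1:\pm i:0]$. What remains is to verify that $q$ is genuinely a generic value, namely that $q$ is neither a critical value nor lies in the image of $\mathcal I_\varphi$ under any resolution of indeterminacy. For the latter, I will use the fact (proved in Lemma \ref{C_SINGULARITY}) that blowing up $p$ twice resolves the indeterminacy, and that the image of the resulting exceptional divisors is the line $\{Y=-2Z\}$, which does not contain $q$. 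For the former, $q\notin \varphi(L_{\rm coll} \cup L_Y)$ can be checked by direct inspection of (\ref{FOURCYCLE}) and the image of $L_Y$. Having two distinct preimages of a generic point yields $\lambda_2(\varphi)=2$.

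Combining the two gives $\lambda_1(\varphi)=\lambda_2(\varphi)=2$, so neither dynamical degree strictly dominates the other and $\varphi$ fails to be cohomologically hyperbolic. The only step that requires genuine work rather than bookkeeping is the verification that $q=[1:1:-1]$ avoids the image of the blown-up exceptional divisors over $p$; this I would defer to (or extract from) the resolution of indeterminacy that is carried out in Lemma \ref{C_SINGULARITY}, where the geometry of the two-step blow-up over $p$ is analyzed in detail. Everything else is a direct computation with the formula defining $\varphi$.
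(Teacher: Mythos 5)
Your argument is correct and follows the paper's proof essentially verbatim: algebraic stability via \cite[Prop.~1.4.3]{SIBONY} (with the collapsed curve $L_{\rm coll}$ having forward orbit in the $4$-cycle disjoint from $p$) gives $\lambda_1=\deg_{\rm alg}=2$, and counting the two preimages $[1:\pm i:0]$ of the generic point $[1:1:-1]$ gives $\lambda_2=2$. One small slip: $L_Y$ is critical but not collapsed (it maps $2$-to-$1$ onto $L_X$, and $L_X$ maps back onto $L_Y$ as a $2$-cycle of lines, not ``into the four-cycle'' of points), so $L_Y$ is simply irrelevant to the algebraic stability check; this does not affect your conclusion.
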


\subsection{Fatou Set}
Note also that $\varphi^2$ fixes both $L_X$ and $L_Y$, with each line transversally superattracting under $\varphi^2$.  In the local coordinate $z=Z/Y$, $\varphi^2_{\mid L_X}$ is $z\mapsto z^2-1$, the well-known Basilica map, with period two superattracting cycle $q_2 \leftrightarrow q_4$.  Similarly, in a suitable local coordinate, $\varphi^2_{\mid L_Y}$ is also the Basilica map, with period two superattracting cycle $q_1 \leftrightarrow q_3$. Finally, $\varphi$ (and hence $\varphi^2$) has $q_5 = [0:0:1]$ as a superattracting fixed point. We obtain five superattracting fixed points for ${\varphi^4}$. 

\begin{question}
Is the Fatou set of $\varphi$ the union of the basin of the superattracting 4-cycle $q_1,\dots,q_4$ and the basin of the superattracting fixed point $q_5$?
\end{question}

\subsection{Resolving the indeterminate point $p$.}
Proof of Theorem \ref{THM:MAIN} relies on careful analysis of ${\varphi^4}$. The indeterminacy set of ${\varphi^4}$ is
\begin{multline*}
  \mathcal{I}_{{{\varphi^4}}} = \{p\} \cup \varphi^{-1}(\{p\}) \cup \varphi^{-2}(\{p\}) \cup \varphi^{-3}(\{p\}) = \\
  \{[1:0:1]\} \cup \{[0:\pm i: 1]\} \cup \{[1:0:-1\pm i]\} \cup \{[0:i:\pm \sqrt{1 \pm i}]\}.
\end{multline*}
In the proof of Theorem \ref{THM:MAIN} we plan to apply
Lemma \ref{CURVE_TRANS_FOLIATION} to $p$, so we only resolve 
the indeterminacy of ${\varphi^4}$ at $p$.

Let $\widehat{\mathbb{CP}^2}$ be the blow-up $\mathbb{CP}^2$ at $p$ with
exceptional divisor $E_1$, and let $\widehat{L}_{\rm coll}$ denote the proper transform of $L_{\rm coll}$.  Then let $\widetilde{\mathbb{CP}^2}$ be the blow-up of $\widehat{\mathbb{CP}^2}$ at the point $E_1 \cap \widehat{L}_{\rm coll}$, resulting in a new exceptional divisor~$E_2$. We will abuse notation by denoting the proper transform of $E_1$ in $\widetilde{\mathbb{CP}^2}$ also by  $E_1$.

\begin{lemm}\label{C_SINGULARITY}
 The map ${\varphi^4}:  \mathbb{CP}^2 \dashrightarrow \mathbb{CP}^2$ lifts to a rational map $\widetilde{{\varphi^4}} : \widetilde{\mathbb{CP}^2} \dashrightarrow\mathbb{CP}^2$ that resolves the indeterminacy of ${\varphi^4}$ at $p$.  We have that $\widetilde{{\varphi^4}}(E_1) = [1:0:-9] =: s$ and that $\widetilde{{\varphi^4}}(E_2)$ is an irreducible algebraic curve $C_4$ of degree~$8$ that is singular at $s$.
\end{lemm}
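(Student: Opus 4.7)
The plan is to resolve $\varphi$ at $p$ explicitly in affine charts and then track the images of the exceptional divisors under the remaining three iterates of $\varphi$.  I first work in the affine patch $X \neq 0$ with coordinates $y = Y/X$ and $u = Z/X - 1$ centered at $p$.  In these coordinates,
\begin{equation*}
\varphi[1 : y : 1 + u] \ =\ [-y^2 : -u : u(2+u)],
\end{equation*}
and the common factor of $u$ in the last two entries is the obstruction to continuity.  In the first blow-up chart $y = u\tilde{y}$, dividing out the factor of $u$ produces a holomorphic lift whose restriction to $E_1$ is the constant $p_1 := [0:1:-2]$; in the complementary chart $u = y\tilde{u}$, a single residual indeterminate point appears precisely at $E_1 \cap \widehat{L}_{\rm coll}$.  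After blowing up there, a short computation in the two resulting charts will show that $\widetilde{\varphi}$ is holomorphic on $E_1 \cup E_2$ and that $E_2$ is mapped bijectively onto the line $L := \{2Y + Z = 0\} \subset \mathbb{P}^2$, which passes through $p_1$.

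To promote $\widetilde{\varphi}$ to a resolution $\widetilde{{\varphi^4}} = \varphi^3 \circ \widetilde{\varphi}$ of ${\varphi^4}$ at $p$, I need to verify that $\varphi^3$ is holomorphic in a neighborhood of $\widetilde{\varphi}(E_1 \cup E_2) = \{p_1\} \cup L$.  Since $\mathcal{I}_{\varphi^3} = \{p, [0:\pm i:1], [1:0:-1\pm i]\}$, this reduces to substituting each of the five listed points into the linear equation $2Y + Z = 0$ and checking that $p_1$ itself is not on the list.  Granted this, $\widetilde{{\varphi^4}}(E_1) = \varphi^3(p_1)$ follows from iterating $\varphi$ three times on $p_1$, producing the orbit $[0:1:-2] \mapsto [1:0:-4] \mapsto [0:1:3] \mapsto [1:0:-9] = s$.

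The heart of the proof is the analysis of $C_4 := \varphi^3(L)$.  Parametrizing $L$ by $\ell[s:t] := [s:t:-2t]$, I compute $\varphi^k \circ \ell$ inductively for $k = 1, 2, 3$, arriving at a triple $(X_3, Y_3, Z_3)$ of homogeneous polynomials of degree $8$ in $[s:t]$.  Verifying that these share no nontrivial common factor (a check by evaluation along $t = 0$, $s = 0$, and $s^2 = 5t^2$) shows the parametrization to be a well-defined map $\mathbb{P}^1 \to \mathbb{P}^2$ whose defining polynomials have degree exactly $8$.  A spot check at the convenient parameter $[s:t] = [1:0]$, whose image is $q_4 = [0:1:0]$, will reveal that $(X_3, Y_3, Z_3)$ provides a local isomorphism onto a smooth point of its image there; hence the parametrization is generically one-to-one and $C_4$ has degree exactly $8$.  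Irreducibility of $C_4$ is automatic from that of $\mathbb{P}^1$.

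Finally, to see that $C_4$ is singular at $s$, I expand the parametrization near $[s:t] = [0:1]$, which corresponds to $p_1$.  A direct calculation yields $X_3(s,1) = -25 + 10 s^2 + O(s^4)$, $Y_3(s,1) = 60 s^2 + 60 s^3 + O(s^4)$, and $Z_3(s,1) = 225 - 240 s^2 + O(s^4)$, so in the affine chart $X \neq 0$ near $s = [1:0:-9]$, both coordinates $Y_3/X_3$ and $Z_3/X_3 + 9$ vanish to order at least $2$ at $s = 0$.  The differential of the parametrization $\mathbb{P}^1 \to C_4$ thus vanishes at $p_1$, and since this parametrization is birational onto its image, $C_4$ cannot be smooth at $s$.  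The main technical hurdle is executing the composition $\varphi^3 \circ \ell$ cleanly, since the intermediate polynomials grow quickly in both degree and number of terms; the degree count and the singularity verification that follow are short low-order Taylor expansions.
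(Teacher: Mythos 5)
Your overall strategy coincides with the paper's: resolve $p$ by two point blow-ups, observe that $E_1$ is sent to $p_1=[0:1:-2]$ and $E_2$ bijectively to the line $C_1=\{Z=-2Y\}$, check $C_1 \cap \mathcal{I}_{\varphi^3}=\emptyset$ so that $\widetilde{\varphi^4}=\varphi^3\circ\widetilde\varphi$ is defined near $E_1\cup E_2$, follow the orbit $[0:1:-2]\mapsto[1:0:-4]\mapsto[0:1:3]\mapsto[1:0:-9]=s$, and exhibit the singularity of $C_4=\varphi^3(C_1)$ at $s$ through a parametrization. Your blow-up charts differ cosmetically from the paper's. For the singularity itself, the paper reduces to $C_3=\varphi^2(C_1)$, checks that its parametrization meets every line through $[0:1:3]$ with multiplicity $\geq 2$, and transports the singularity to $s$ via the fact that $\varphi$ is a local biholomorphism near $[0:1:3]$; you instead Taylor-expand the full degree-$8$ parametrization of $C_4$ at the parameter $[0:1]$ and conclude from the vanishing derivative. (Your Taylor coefficients $-25+10s^2+O(s^4)$, $60s^2+60s^3+O(s^4)$, $225-240s^2+O(s^4)$ are correct.) The paper's reduction to $C_3$ is computationally lighter; otherwise the two are equivalent.

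There is, however, a genuine gap in your justification of birationality. You argue that a spot check at the parameter $[1:0]$, showing the parametrization is a local isomorphism onto a smooth point, implies that $\varphi^3\circ\ell$ is \emph{generically} one-to-one. This does not follow: a $k$-to-$1$ branched cover is a local isomorphism away from its ramification locus (compare $z\mapsto z^2$), so unramifiedness at a single point rules out nothing. You need birationality for two steps — the degree-$8$ claim, and the inference "derivative vanishes at $[0:1]$ $\Rightarrow$ $C_4$ singular at $s$" (if the parametrization were locally $2$-to-$1$ near $[0:1]$, the branch could be smooth). A correct patch: the parametrization $\varphi^2\circ\ell$ of $C_3$ has $\zeta$-coordinate $3-t^2$ and $x$-coordinate $\tfrac{t^2(t+2)^2}{t^2-5}$, and one checks that $t\mapsto -t$ does not preserve the pair, so $\varphi^2\circ\ell$ is birational onto $C_3$; then use that $D\varphi$ is nonsingular at $[0:1:3]$ (which you already implicitly use by following the orbit) to deduce that $\varphi^3\circ\ell$ is locally injective near $[0:1]$, which is enough for the singularity conclusion, and to finish the birationality (hence degree $8$) claim globally one checks that $\varphi|_{C_1}$, $\varphi|_{C_2}$, $\varphi|_{C_3}$ are all generically injective. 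Alternatively one can compute the implicit degree-$8$ equation of $C_4$ as the paper does, which settles both the degree and the singularity simultaneously.
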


\begin{proof}
We will first show that $\varphi$ lifts to a rational map $\widetilde{\varphi}:
\widetilde{\mathbb{CP}^2} \dashrightarrow \mathbb{CP}^2$ that resolves the
indeterminacy of $\varphi$ at~$p$, satisfying $\widetilde{\varphi}(E_1) = [0:1:-2]$ and $\widetilde{\varphi}(E_2) = \{Z=-2Y\}$.

We will do calculations in several systems of local coordinates in a neighborhood of $E_1$ and $E_2$.  A summary is shown in 
Figure \ref{BLOWUP}.
\begin{figure}
\scalebox{1.3}{
\epsfig{file=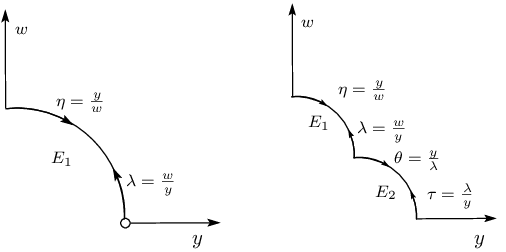}%
}
\caption{\label{BLOWUP}Left: coordinates used for the first blow-up of $p$.  The new indeterminate point $(y,\lambda) = (0,0)$ for $\widehat{\varphi}$ is marked by an open circle.  Right: coordinates used for the second blow-up.}
\end{figure}
Consider the two systems of affine coordinates on $\mathbb{CP}^2$ given by $(y,z)=\left(Y/X,Z/X\right)$ and $(x,\zeta)=(X/Y,Z/Y)$.  Let $w:=z-1$, so $(y,w)$ are coordinates that place $p$ locally at the origin.  
Writing $(x',\zeta')=\varphi(y,w)$, we have
\begin{eqnarray*}
(x',\zeta') = \left(\frac{y^2}{w},-w-2\right).
\end{eqnarray*}
There are two systems of coordinates in a neighborhood of $E_1$ within
$\widehat{\mathbb{CP}^2}$. They are $(y, \lambda)$, where $w = \lambda y$, and $(w,\eta)$, where $y=\eta w$.  Then, $\varphi$ lifts to a rational map $\widehat{\varphi}: \widehat{\mathbb{CP}^2} \dashrightarrow\mathbb{CP}^2$ which is expressed in these coordinates by 
\begin{eqnarray}\label{EQN:BLOWUP1}
(x',\zeta') = \left(\frac{y}{\lambda},-\lambda y-2\right) \mbox{  and  } (x',\zeta')=\left(\eta^2 w,-w-2\right).
\end{eqnarray}
The exceptional divisor $E_1$ is given in the first set of coordinates by $y=0$ and in the second set of coordinates by $w=0$.  It is clear from 
(\ref{EQN:BLOWUP1}) that $\hat \varphi$ extends holomorphically to all points of $E_1$ other than $\lambda = 0$, sending each of these points to $(x,\zeta) = (0,-2)$.

In the $(y,w)$ coordinates, $L_{\rm coll}$ is given by $w = 0$.  Thus, in the $(y, \lambda)$ coordinates, the proper transform $\widehat{L}_{\rm coll}$ is given by $\lambda=0$ so that $E_1 \cap \widehat{L}_{\rm coll}$ is given by $(0,0)$, the indeterminate point for $\hat \varphi$ on $E_1$.  We now blow this point up using two new systems of coordinates in a neighborhood of the new exceptional divisor $E_2$.  They are $(y,\tau )$, where $\lambda=\tau y$, and $(\lambda,\theta )$, where $y=\theta \lambda$.

The map $\widehat{\varphi}$ lifts to a map $\widetilde{\varphi}$, which can be expressed in local coordinates as
\begin{eqnarray}\label{EQN:BLOWUP2}
(y',z') = \left({\tau },-\tau ^2 y^2-2 \tau  \right) \mbox{  and  }
(x',\zeta') = \left(\theta ,-\theta \lambda^2  -2\right) 
\end{eqnarray}
This shows that $\widetilde{\varphi}: \widetilde{\mathbb{CP}^2} \dashrightarrow \mathbb{CP}^2$ is holomorphic in a neighborhood of $E_2$.  Thus, it is holomorphic in a neighborhood of $E_1 \cup E_2$.  Since $E_2$ is given in these systems of coordinates by $y=0$ and $\lambda = 0$, respectively,  one can see from (\ref{EQN:BLOWUP2}) that $\widetilde{\varphi}(E_2) = \{Z=-2Y\}$.

Notice that the line $C_1 := \{Z=-2Y\}$ passes through no points of 
\begin{eqnarray*}
\mathcal{I}_{\varphi^3} = \left\{[1:0:1],[0:\pm i: 1],[1:0:-1\pm i]\right\}.
\end{eqnarray*}
This implies that 
\begin{eqnarray*}
\widetilde{{\varphi^4}} := \varphi^3 \circ \widetilde{\varphi} : \widetilde{\mathbb{CP}^2} \dashrightarrow 
\mathbb{CP}^2
\end{eqnarray*}
is holomorphic in a neighborhood of $E_1 \cup E_2$; i.e., that $\widetilde{{\varphi^4}}$ resolves the indeterminacy of ${\varphi^4}$ at $p$.
Since $\widetilde{\varphi}(E_1) = [0:1:-2]$ it is easy to check that $\widetilde{{\varphi^4}}(E_1)  = [1:0:-9] := s$.

Meanwhile, $\widetilde{\varphi}(E_2)$ is the projective line $C_1$. Its forward image under $\varphi^3$ is an algebraic curve $C_4 := \varphi^3(C_1)$ of degree $8 = \deg(\varphi^3)$. Since $C_1$ does not intersect $\mathcal{I}_{\varphi^3}$, $C_4$ is irreducible.

It remains to show that $C_4$ is singular at $s$.  Notice that $\varphi$ maps a neighborhood of $[0:1:3]$ biholomorphically to a neighborhood of $s$.  Thus, it will be sufficient to show that $C_3 := \varphi^2 (C_1)$ is singular at $[0:1:3]$.  See Figure~\ref{CURVES} for plots of the real slices of $C_3$ and $C_4$.

\begin{figure}
\centering {
\scalebox{.75}{
\epsfig{file=./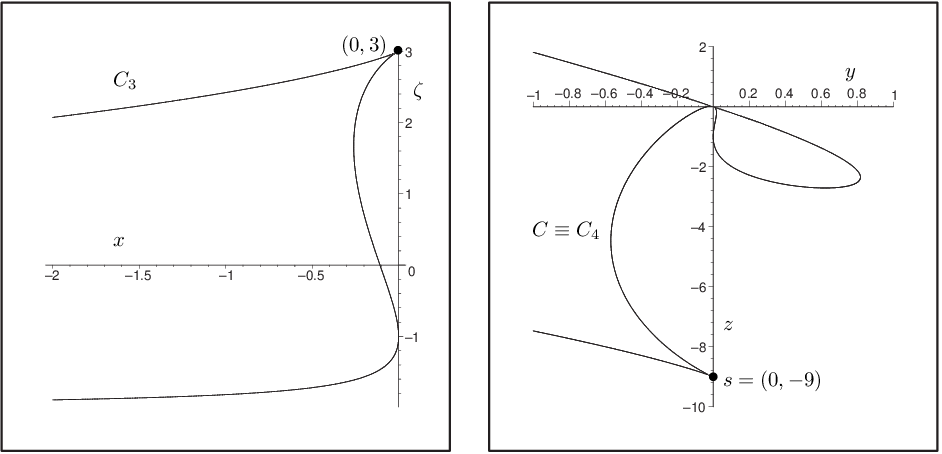}%
}}
\caption{\label{CURVES}The curve $C_3$ shown on the left in the $(x,\zeta)$ coordinates and the curve $C_4$ shown on the right in the $(y,z)$ coordinates.  The cusps at $(x,\zeta) = (0,3)$ and $(y,z) = (0,-9)$, respectively, are marked.}
\end{figure}

Let us work in the $(x,\zeta)$ local coordinates. We have 
\begin{eqnarray*}
(x',\zeta')  = \varphi^2(x,\zeta)  = \left({\frac {{x}^{2} \left( x-\zeta \right) ^{2}}{-1+{x}^{2}-{\zeta}^{2}}} , -1-{x}^{2}+{\zeta}^{2} \right).
\end{eqnarray*}
If we parameterize $C_1$ by $t \mapsto (t,-2)$ and then compose with $\varphi^2$, we obtain the following parameterization of $C_3$:
\begin{eqnarray}\label{PARAM}
t \mapsto \left({\frac {{t}^{2} \left( t+2 \right) ^{2}}{-5+{t}^{2}}},3-{t}^{2}\right).
\end{eqnarray}
To see that $C_3$ is singular at $(0,3)$, it suffices to check that $C_3$ intersects any line through $(0,3)$ with intersection number $\geq 2$.  Such a generic line is given by $A x + B(\zeta-3) = 0$.  Substituting (\ref{PARAM}) into this equation, we obtain 
\begin{eqnarray*}
{\frac {A{t}^{2} \left( t+2 \right) ^{2}}{-5+{t}^{2}}}-B{t}^{2}=0,
\end{eqnarray*}
which clearly has a double zero at $t=0$, implying that the intersection number of $C_3$ with the line given by $A x + B(\zeta-3) = 0$ is at least two.
\end{proof}

\begin{rema}
Applying $\varphi$ to the parameterization of $C_3$ given in~(\ref{PARAM}) one gets a parameterization of~$C_4$.   One can then use Gr\"obner bases \cite[Ch. 3, \S 3]{CLO} to generate an implicit equation for $C_4$ (which we omit here).
Its lowest degree terms are quadratic, which gives an alternative computational proof that $C_4$ is singular at $s$.
\end{rema}

\section{Proof of non-existence of invariant foliations for $\varphi$}\label{SEC:PROOF1}

The whole proof will take place in the coordinates $(y,z)=(Y/X,Z/X)$.  Throughout the proof we use implicitly that $(\varphi^m)^*(\varphi^n)^* \mathcal{F} = (\varphi^{m+n})^* \mathcal{F}$ for $m,n \geq 1$, which follows from Remark~\ref{LEM_COMPOSITION}.

The $\varphi$-preimage of $L_{\rm coll}$ is the circle $S:=\{y^2+z^2=1\}$ which intersects $L_Y$ at the indeterminate point $p=(0,1)$ and at $q_3=(0,-1)$. It follows that $\varphi^2$ is a finite holomorphic map at each point of $L_Y$ other than $p$ and $q_3$, and consequently, ${\varphi^4}$ is a finite holomorphic map at every point of $L_Y$ except at the finite set of points ${\rm NF} := \{ (0,z) \, | \, z=1,-1,0,-2,-1\pm i \}$ consisting of $p$, $q_3$, and their $\varphi^2$-preimages.

One can check that the point $r  := (0,-1+\sqrt{2}i) \not\in {\rm NF}$ is a
${\varphi^4}$-preimage of the singular point $s$ of the blow-up curve $C$.  We will first show for any foliation $\mathcal{F}$ that $({\varphi^4})^{\ast} \mathcal{F}$ is singular either at the indeterminate point $p$ or at $r $.  According to Remark \ref{LEM_LEAF_OR_TRANSV}, either $C$ is a leaf of $\mathcal{F}$ or generically transverse to $\mathcal{F}$.  In the second case, Lemma \ref{CURVE_TRANS_FOLIATION} immediately implies that $({\varphi^4})^{\ast}\mathcal{F}$ is singular at $p$.  On the other hand, if $C$ is a leaf of $\mathcal{F}$, $s$ must be a singular point for $\mathcal{F}$, since $s$ is a singular point of $C$ (Lemma \ref{C_SINGULARITY}). Since ${\varphi^4}$ is a finite holomorphic map at $r $ and ${\varphi^4}(r )=s$, by Lemma \ref{PREIMAGE_SINGULAR}, $({\varphi^4})^{\ast}\mathcal{F}$ is singular at $r $ as well.

Note that neither $p$ nor $r $ are critical points of $\varphi_{|L_Y}$, so they are not exceptional points.  One can check that any preorbit of $p$ or $r $ under ${\varphi^4}_{|L_Y}$ is disjoint from ${\rm NF}$.   Let $a _0$ be the point ($p$~or~$r $) where $({\varphi^4})^{\ast}\mathcal{F}$ is singular.  If we denote some ${\varphi^4}_{|L_Y}$-preorbit of $a _0$ by $\{ a _{-i} \}_{i=0}^{\infty}$, then for each $i$, $\big( \varphi^{4(i+1)} \big)^{\ast}\mathcal{F}$ will be singular at $a _{-i}$.

Now suppose the foliation $\mathcal{F}$ is preserved by $\varphi^{\ell}$ for any positive integer $\ell$. Then $\mathcal{F} = \big( \varphi^{4\ell k} \big)^* \mathcal{F}$ is singular at $a _{(-\ell k)}$ for all $k \geq 1$.  This implies $\mathcal{F}$ has infinitely many singular points, giving a contradiction.  
\hfill \qed

\bigskip\noindent{\bf Observation:} It was more convenient to explain the proof with a specific map; however the proof holds in greater generality. In fact, we have established

\begin{theo}\label{THM:CONDITIONS}
  Assume the map $\eta:\mathbb{CP}^2 \dashrightarrow \mathbb{CP}^2$ satisfies the following conditions:
  \begin{enumerate}
    \item There exists $p \in \mathcal{I}_\eta$ and some iterate $k$ so that $\eta^k$ blows up $p$ to a singular curve $C$.
    \item The point $p$ has an infinite preorbit along which $\eta$ is a finite holomorphic map.
    \item There is a singular point $s \in C$ having an infinite preorbit along which $\eta$ is a finite holomorphic map.
  \end{enumerate}
  Then no iterate of $\eta$ preserves a foliation.
\end{theo}

Note that one can replace Conditions 2 and 3 with the equivalent condition that $p$ and $s$ have infinite $\eta^k$-preorbits along which $\eta^k$ is a finite holomorphic map.

\begin{rema}
A theorem of Jouanolou \cite{JOU,GHYS} asserts that a foliation $\mathcal{F}$
cannot have algebraic leaves of arbitrarily high degree.  This implies the
following variant of Theorem \ref{THM:CONDITIONS}:
\theoremstyle{theo}
\newtheorem*{theop}{Theorem 4.1'}
\begin{theop}
Assume the map $\eta:\mathbb{CP}^2 \dashrightarrow \mathbb{CP}^2$ has an indeterminate point $p$ such that
  \begin{enumerate}
    \item For each $n$ the blow-up $\eta^n(p)$ contains an irreducible curve $C_n$ so that $\limsup \, \deg(C_n) = \infty$.
    \item The point $p$ has an infinite preorbit along which $\eta$ is a finite holomorphic map.
  \end{enumerate}
  Then no iterate of $\eta$ preserves a foliation.
\end{theop}
Notice that Condition 1 holds if the orbit of $\eta(p)$ avoids $\mathcal{I}_\phi$, and each curve maps to the next
with topological degree less than $\deg_{\rm alg}(\eta)$.  Also, since the curves are rational, the degree-genus formula implies that they will develop singularities.  However in certain applications it may be hard to verify 
that these singularities satisfy Condition 3 in Theorem \ref{THM:CONDITIONS}.  In that situation Theorem 4.1' can be useful.

\end{rema}

\section{Proof of non-existence of invariant foliations for $\psi$}\label{SEC:PROOF2}

The extension of $\psi(x,y) = \big(x(x-y)+2,(x+y)(x-y)+1\big)$ to $\mathbb{CP}^2$ is 
\begin{eqnarray}
\psi[X:Y:Z] = [X(X-Y)+2Z^2:(X+Y)(X-Y)+Z^2:Z^2].
\end{eqnarray}
Let us first verify that $\psi$ is algebraically stable.  The indeterminate set consists of the point
\begin{eqnarray*}
p := [1:1:0],
\end{eqnarray*}
and the critical set is
\begin{eqnarray*}
\{X=Y\} \cup \{Z = 0\}.
\end{eqnarray*}
One can check that $L_{\rm coll}:=\{X=Y\}$ satisfies that $\psi(L_{\rm coll} \setminus \{p\}) = [2:1:1]$, which is periodic of period $2$:
\[[2:1:1] \leftrightarrow [4:4:1].\]
The line $L_{\infty}:= \{Z=0\}$ is not collapsed by $\psi$; using the coordinate $w = Y/X$, the restriction $\psi_{\mid L_\infty}$ is given by $w \mapsto w+1$.  Since no iterate of a collapsing line lands on the indeterminate point $p$, it follows from \cite[Prop.1.4.3]{SIBONY} that $\psi$ is algebraically stable; thus, $\lambda_1(\psi) = 2$.

Any point $(a,b) \in \mathbb{C}^2$ with $2\,a-b-3 \neq 0$ has two preimages in $\mathbb{C}^2$ given by
\begin{eqnarray}\label{EQN:PREIMAGES}
(x,y) = \left(\pm{\frac {-2+a}{\sqrt {2\,a-b-3}}},\mp {\frac {a-1-b}{\sqrt {2\,a-b-3}}}\right).
\end{eqnarray}
Since generic points of $\mathbb{C}^2$ do not have a preimage on $L_\infty$, $\lambda_2(\psi) = 2$; thus $\psi$ is not cohomologically hyperbolic.

\begin{proof}[Proof of Theorem \ref{THM2}]

It suffices to verify that $\psi$ satisfies the conditions of Theorem
\ref{THM:CONDITIONS}.  

A sequence of two blow-ups at $p$ resolves the indeterminacy of $\psi$.  The first blow-up results in a lift $\widehat \psi: \widehat{\mathbb{CP}^2} \dashrightarrow \mathbb{CP}^2$ with exceptional divisor $E_1$.  The point $q \in E_1$ with coordinate $\lambda := (Y-X)/Z = 0$ is indeterminate for $\widehat \psi$, and $\widehat \psi\left(E_1 \setminus \{q\} \right) = [1:2:0]$.  The second blow-up (at $q$) resolves the indeterminacy, and the
lifted map $\widetilde{\psi}$ sends the new exceptional divisor $E_2$ to the line $C_1:= \{2X-Y-3Z=0\}$.  We omit the calculations as they are very similar to those in Lemma~\ref{C_SINGULARITY}.

The indeterminacy set of $\psi^3$ is
\begin{eqnarray*}
\mathcal{I}_{\psi^3} = \mathcal{I}_{\psi} \cup \psi^{-1}(\mathcal{I}_\psi) \cup \psi^{-2}(I_\psi) = \{[1:1:0]\} \cup \{[1:0:0]\} \cup \{[1:-1:0]\}.
\end{eqnarray*}
Since $\mathcal{I}_{\psi^3}$ is disjoint from $C_1$, the map $\widetilde{\psi^4} := \psi^3 \circ \widetilde{\psi}$ resolves the indeterminacy of $\psi^4$ at $p$.  Moreover, $\widetilde{\psi^4}$ blows up $p$ to the curve
\begin{eqnarray*}
C_4:=\widetilde{\psi^4}(E_2) = \psi^3(C_1).
\end{eqnarray*}
We will now check that $s = [4:4:1]$ is a singular point on $C_4$.  Let us work in the affine coordinates $(x,y) = (X/Z,Y/Z)$.  If we parameterize $C_1$ by $t \mapsto (t,2t-3)$, then a parameterization of $C_3: = \psi^2(C_1)$ is obtained by substituting into the expression for $\psi^2$:
\begin{align}\label{EQN:C2PARAM}
 t \mapsto
\left(-2\,{t}^{4}+15\,{t}^{3}-33\,{t}^{2}+12\,t+22 \, , \, -8\,{t}^{4}+66\,{t}^{3}-187\,{t}^{2}+204\,t-59\right).
\end{align}

One can see that $C_3$ is singular at $(2,1)$ by using Gr\"obner bases to convert (\ref{EQN:C2PARAM}) into the following implicit equation for $C_3$
\begin{eqnarray*}
256\,{u}^{4}-256\,{u}^{3}v+96\,{u}^{2}{v}^{2}-16\,u{v}^{3}+{v}^{4}-5650\,{u}^
{3}+6253\,{u}^{2}v && \\-2228\,u{v}^{2}+257\,{v}^{3} +10816\,{u}^{2}-10816\,uv+2704
\,{v}^{2} &=& 0,
\end{eqnarray*}
which is expressed in local coordinates $(u,v)$ where $x=u+2, y = v+1$.
Since the lowest order terms are of degree $2$, the point $(x,y) = (2,1)$ is singular for $C_3$. (This can also be shown using the parameterization of $C_3$, like in the proof of Lemma~\ref{C_SINGULARITY}.)

One can check that $D\psi$ is invertible at $(2,1)$ so that $C_4 = \psi(C_3)$
is singular at $s = \psi(2,1) = (4,4)$.  Therefore, Condition (1) from Theorem \ref{THM:CONDITIONS} holds.

Since the action of $\psi$ on $L_\infty$ is given by $w \mapsto w+1$, and $p$ is given in this coordinate by $w = 1$, we find that $p$ has an infinite pre-orbit under $\psi$.  Moreover, these preimages are disjoint from $p$ (and hence from the collapsing line $L_{\rm coll}$), so Condition 2 of the theorem holds.

It remains to check Condition (3).  Notice that $\psi(L_{\rm coll} \setminus \{p\}) = (2,1) \in C_1$.  Therefore, it suffices to show that $(4,4) \not \in C_1$ has an infinite pre-orbit in $\mathbb{C}^2$ consisting of points not on $C_1$. For any $(a,b) \not \in C_1$, the two preimages given by
(\ref{EQN:PREIMAGES}) cannot both be on $C_1$, since in that case
\begin{eqnarray*}
\frac{-5+3a-b+3\sqrt{2a-b-3}}{\sqrt{2a-b-3}} &=& 0, \mbox{and} \\
\frac{5-3a+b+3\sqrt{2a-b-3}}{\sqrt{2a-b-3}} &=& 0.
\end{eqnarray*}
Summing these equations yields $-6 = 0$, a contradiction. Therefore, any point of $\mathbb{C}^2 \setminus C_1$ has an infinite pre-orbit disjoint from 
$C_1$.

Since all three conditions of Theorem \ref{THM:CONDITIONS} hold, we conclude that no iterate of $\psi$ preserves a foliation.
\end{proof}

\section{Proof of non-existence of invariant foliations for generic rotations of ${\varphi^4}$}\label{SEC:PROOF3}

Note that for any $A \in {\rm PGL}(3,\mathbb{C})$, we have $\mathcal{I}_{A \circ {\varphi^4}} = \mathcal{I}_{\varphi^4}$. For any $n \in \mathbb{Z}^+$, let
{\small{\begin{eqnarray*}
\Omega^n_1 &:=& \big\{A \in {\rm PGL}(3,\mathbb{C}) \, : \, (A \circ {\varphi^4})^n(L_{\rm coll} \setminus \mathcal{I}_{\varphi^4})  \not \in \{p,r\}\big\}, \\
\Omega^n_2 &:=& \big\{A \in {\rm PGL}(3,\mathbb{C}) \, : \, p,q \not \in (A \circ {\varphi^4})^n(\mathcal{I}_{\varphi^4}) \big\}, \, \mbox{and} \\
\Omega^n_3 &:=& \big\{A \in {\rm PGL}(3,\mathbb{C}) \, : \, \mbox{ for all } 0 \leq i \neq j \leq n, \, (A \circ {\varphi^4})^{-i}(p) \cap (A \circ {\varphi^4})^{-j}(p) = \emptyset \big\} \cap \\ && \left\{A \in {\rm PGL}(3,\mathbb{C}) \, : \, \mbox{ for all } 0 \leq i \neq j \leq n, \, (A \circ {\varphi^4})^{-i}(r) \cap (A \circ {\varphi^4})^{-j}(r) = \emptyset\right\}. \nonumber
\end{eqnarray*}}}
Here, $({\varphi^4})^n(\mathcal{I}_{\varphi^4})$ denotes the total transform of $\mathcal{I}_{\varphi^4}$, i.e. $({\varphi^4})^n(\mathcal{I}_{\varphi^4})= ({\varphi^4})^{n-1}(C_1)$, where ${\varphi^4}$ blows up $\mathcal{I}_{\varphi^4}$ to~$C_1$.

Each of these sets is the complement of an algebraic set because the conditions
\begin{enumerate}
  \item \label{one}
        $(A \circ {\varphi^4})^n$ collapses $L_{\rm coll}$ on $p$ or in $r$,
  \item \label{two}
        $p,r \in (A \circ {\varphi^4})^n\mathcal{I}_{\varphi^4}$,
  \item \label{three}
        $(A \circ {\varphi^4})^{-i}(p) \cap (A \circ {\varphi^4})^{-j}(p) \neq \emptyset$, and
  \item \label{four}
        $(A \circ {\varphi^4})^{-i}(r) \cap (A \circ {\varphi^4})^{-j}(r) \neq \emptyset$
\end{enumerate}
are all algebraic. In order to conclude that $\Omega^n_1 \cap \Omega^n_2 \cap \Omega^n_3$ is a dense open subset of ${\rm PGL}(3,\mathbb{C})$ we only need to show that this intersection is not empty. Notice that ${\rm id} \in \Omega^n_1 \cap \Omega^n_2$ follows from our study of ${\varphi^4}$. For even $i,j$, Property~\ref{three} holds for $A = {\rm id}$ because $p$ is in the basin of $\infty$ for ${\varphi^4}_{|L_Y}$. Similarly, for odd $i,j$ by taking ${\varphi^4}(p)$, which is in the basin of $\infty$ for ${\varphi^4}_{|L_X}$.

By Baire's Theorem the intersection $\Omega^\infty:=\bigcap_n \Omega^n_1 \cap \Omega^n_2 \cap \Omega^n_3$ is generic in ${\rm PGL}(3,\mathbb{C})$.

For $A \in \Omega^\infty$, $A \circ {\varphi^4}$ is algebraically stable, since $A \not \in \Omega^n_1$ for any $n$.  Thus $\lambda_1(A \circ {\varphi^4}) = 16$. Meanwhile, for any $A \in {\rm PGL}(3,\mathbb{C})$, $\lambda_2(A \circ {\varphi^4}) = \lambda_2({\varphi^4}) = 16$. Thus, for any $A \in \Omega^\infty$, $A \circ {\varphi^4}$ is non-cohomologically hyperbolic.

We will now verify that for $A \in \Omega^\infty$ the composition $A \circ
{\varphi^4}$ satisfies the hypotheses of Theorem~\ref{THM:CONDITIONS}.
Condition (1) follows with $k=1$ from Lemma~\ref{C_SINGULARITY} since
${\varphi^4}$ blows-up $p$ to $C$, which is singular at $s$.  Thus, $A \circ
{\varphi^4}$ blows-up $p$ to $A(C)$, which is singular at $A(s)$.  Since $A 
\in \Omega_1^n \cup \Omega_2^n \cup \Omega_3^n$ for any $n$, the point $p$ has an infinite pre-orbit under $A \circ {\varphi^4}$ along which $A \circ {\varphi^4}$ is finite. Thus, Condition (2) holds.  Finally, since ${\varphi^4}$ is finite at $r$ with ${\varphi^4}(r) = s$, the composition $A \circ {\varphi^4}$ is finite at $r$ and maps $r$ to $A(s)$.  Condition (3) then follows from the fact that $A \in \Omega_1^n \cup \Omega_2^n \cup \Omega_3^n$.

Since the hypotheses of Theorem~\ref{THM:CONDITIONS} are satisfied, no iterate of $A \circ {\varphi^4}$ preserves a foliation. \qed \\

\noindent{\bf Observation:} All the arguments above hold with $\varphi$ substituting ${\varphi^4}$, except for the verification of Condition~(3), because it is not clear how to ensure that $(A \circ \varphi)^4(p)$ remains singular and has an infinite pre-orbit along which $A \circ \varphi$ is finite. For the sake of simplicity we decided not to address this technicality, but we expect that for a generic rotation of $\varphi$ the same result holds.

\begin{question}
(Ch.~Favre) Let $\eta: \mathbb{CP}^2 \dashrightarrow \mathbb{CP}^2$ be an algebraically stable map that is not cohomologically hyperbolic.  Then generic rotations $A\circ\eta$ will have the same dynamical degrees and hence also be non-cohomologically hyperbolic.  Under what conditions on $\eta$ will generic rotations of $\eta$ not preserve a foliation?
\end{question}

\def\cprime{$'$}


\begin{thebibliography}{10}

\bibitem{ABATE}
Abate, M.
\newblock {\em Local Dynamics of Singular Holomorphic Foliations}.
\newblock ETS, 2009.

\bibitem{BG}
L.~Bartholdi and R.~I. Grigorchuk.
\newblock On the spectrum of {H}ecke type operators related to some fractal
  groups.
\newblock {\em Tr. Mat. Inst. Steklova}, 231(Din. Sist., Avtom. i Beskon.
  Gruppy):5--45, 2000.

\bibitem{BGN}
Laurent Bartholdi, Rostislav Grigorchuk, and Volodymyr Nekrashevych.
\newblock From fractal groups to fractal sets.
\newblock In {\em Fractals in {G}raz 2001}, Trends Math., pages 25--118.
  Birkh\"auser, Basel, 2003.

\bibitem{BCK}
Eric Bedford, Serge Cantat, and Kyounghee Kim.
\newblock Pseudo-automorphisms with no invariant foliation.
\newblock {\em J. Mod. Dyn.}, 8(2):221--250, 2014.

\bibitem{BK3}
Eric Bedford and Kyounghee Kim.
\newblock Dynamics of (pseudo) automorphisms of 3-space: periodicity versus
  positive entropy.
\newblock {\em Publ. Mat.}, 58(1):65--119, 2014.


\bibitem{BK2}
Eric Bedford and Kyounghee Kim.
\newblock Linear fractional recurrences: periodicities and integrability.
\newblock {\em Ann. Fac. Sci. Toulouse Math. (6)}, 20(Fascicule
  Special):33--56, 2011.

\bibitem{Br}
Marco Brunella.
\newblock Minimal models of foliated algebraic surfaces.
\newblock {\em Bull. Soc. Math. France}, 127(2):289--305, 1999.

\bibitem{CF}
Serge Cantat and Charles Favre.
\newblock Sym\'etries birationnelles des surfaces feuillet\'ees.
\newblock {\em J. Reine Angew. Math.}, 561:199--235, 2003.

\bibitem{CERVEAU}
Dominique Cerveau.
\newblock Feuilletages holomorphes de codimension 1. {R}\'eduction des
  singularit\'es en petites dimensions et applications.
\newblock In {\em Dynamique et g\'eom\'etrie complexes ({L}yon, 1997)},
  volume~8 of {\em Panor. Synth\`eses}, pages ix, xi, 11--47. Soc. Math.
  France, Paris, 1999.

\bibitem{CLO}
David Cox, John Little, and Donal O'Shea.
\newblock {\em Ideals, varieties, and algorithms}.
\newblock Undergraduate Texts in Mathematics. Springer, New York, third
  edition, 2007.
\newblock An introduction to computational algebraic geometry and commutative
  algebra.

\bibitem{DJ1}
Marius Dabija and Mattias Jonsson.
\newblock Endomorphisms of the plane preserving a pencil of curves.
\newblock {\em Internat. J. Math.}, 19(2):217--221, 2008.

\bibitem{DJ2}
Marius Dabija and Mattias Jonsson.
\newblock Algebraic webs invariant under endomorphisms.
\newblock {\em Publ. Mat.}, 54(1):137--148, 2010.

\bibitem{DF}
J.~Diller and C.~Favre.
\newblock Dynamics of bimeromorphic maps of surfaces.
\newblock {\em Amer. J. Math.}, 123(6):1135--1169, 2001.

\bibitem{DDG}
Jeffrey Diller, Romain Dujardin, and Vincent Guedj.
\newblock Dynamics of meromorphic maps with small topological degree {I}: from
  cohomology to currents.
\newblock {\em Indiana Univ. Math. J.}, 59(2):521--561, 2010.

\bibitem{DN}
Tien-Cuong Dinh and Vi{\^e}t-Anh Nguy{\^e}n.
\newblock Comparison of dynamical degrees for semi-conjugate meromorphic maps.
\newblock {\em Comment. Math. Helv.}, 86(4):817--840, 2011.

\bibitem{DNT_ENTROPY}
Tien-Cuong Dinh, Viet-Anh Nguyen, and Tuyen~Trung Truong.
\newblock Equidistribution for meromorphic maps with dominant topological
  degree.
\newblock Preprint: \url{http://arxiv.org/abs/1303.5992}.

\bibitem{DNT}
Tien-Cuong Dinh, Vi{\^e}t-Anh Nguy{\^e}n, and Tuyen~Trung Truong.
\newblock On the dynamical degrees of meromorphic maps preserving a fibration.
\newblock {\em Commun. Contemp. Math.}, 14(6):1250042, 18, 2012.

\bibitem{DS_REGULARIZATION}
Tien-Cuong Dinh and Nessim Sibony.
\newblock Regularization of currents and entropy.
\newblock {\em Ann. Sci. \'Ecole Norm. Sup. (4)}, 37(6):959--971, 2004.

\bibitem{DS_BOUND}
Tien-Cuong Dinh and Nessim Sibony.
\newblock Une borne sup\'erieure pour l'entropie topologique d'une application
  rationnelle.
\newblock {\em Ann. of Math. (2)}, 161(3):1637--1644, 2005.

\bibitem{FAVRE_JONSSON}
Charles Favre and Mattias Jonsson.
\newblock Dynamical compactifications of {${\bf C}^2$}.
\newblock {\em Ann. of Math. (2)}, 173(1):211--248, 2011.

\bibitem{FP}
Charles Favre and Jorge~Vit{\'o}rio Pereira.
\newblock Foliations invariant by rational maps.
\newblock {\em Math. Z.}, 268(3-4):753--770, 2011.

\bibitem{FRIEDLAND}
Shmuel Friedland.
\newblock Entropy of polynomial and rational maps.
\newblock {\em Ann. of Math. (2)}, 133(2):359--368, 1991.

\bibitem{GHYS}
{\'E}tienne Ghys.
\newblock \`{A} propos d'un th\'eor\`eme de {J}.-{P}. {J}ouanolou concernant
  les feuilles ferm\'ees des feuilletages holomorphes.
\newblock {\em Rend. Circ. Mat. Palermo (2)}, 49(1):175--180, 2000.


\bibitem{GZ}
Rostislav~I. Grigorchuk and Andrzej {\.Z}uk.
\newblock The lamplighter group as a group generated by a 2-state automaton,
  and its spectrum.
\newblock {\em Geom. Dedicata}, 87(1-3):209--244, 2001.

\bibitem{GUEDJ_ERGODIC}
Vincent Guedj.
\newblock Entropie topologique des applications m\'eromorphes.
\newblock {\em Ergodic Theory Dynam. Systems}, 25(6):1847--1855, 2005.

\bibitem{GUEDJ}
Vincent Guedj.
\newblock Ergodic properties of rational mappings with large topological
  degree.
\newblock {\em Ann. of Math. (2)}, 161(3):1589--1607, 2005.

\bibitem{GUEDJ_HABILITATION}
Vincent Guedj.
\newblock Propri\'et\'es ergodiques des applications rationnelles.
\newblock In {\em Quelques aspects des syst\`emes dynamiques polynomiaux},
  volume~30 of {\em Panor. Synth\`eses}, pages 97--202. Soc. Math. France,
  Paris, 2010.

\bibitem{IL72}
Ju.~S. Il{\cprime}ja{\v{s}}enko.
\newblock Foliations by analytic curves.
\newblock {\em Mat. Sb. (N.S.)}, 88(130):558--577, 1972.

\bibitem{IY}
Yulij Ilyashenko and Sergei Yakovenko.
\newblock {\em Lectures on analytic differential equations}, volume~86 of {\em
  Graduate Studies in Mathematics}.
\newblock American Mathematical Society, Providence, RI, 2008.

\bibitem{JOU}
J.~P. Jouanolou.
\newblock Hypersurfaces solutions d'une \'equation de {P}faff analytique.
\newblock {\em Math. Ann.}, 232(3):239--245, 1978.

\bibitem{PS}
Jorge~Vit{\'o}rio Pereira and Percy~Fern{\'a}ndez S{\'a}nchez.
\newblock Transformation groups of holomorphic foliations.
\newblock {\em Comm. Anal. Geom.}, 10(5):1115--1123, 2002.

\bibitem{PZ}
Fabio Perroni and De-Qi Zhang.
\newblock Pseudo-automorphisms of positive entropy on the blowups of products
  of projective spaces.
\newblock {\em Math. Ann.}, 359(1-2):189--209, 2014.

\bibitem{REBELO}
Julio C.~Rebelo and Helena Reis
\newblock Local Theory of Holomorphic Foliations and Vector Fields.
\newblock Preprint: \url{http://arxiv.org/pdf/1101.4309.pdf}.

\bibitem{RUSS_SHIFF}
Alexander Russakovskii and Bernard Shiffman.
\newblock Value distribution for sequences of rational mappings and complex
  dynamics.
\newblock {\em Indiana Univ. Math. J.}, 46(3):897--932, 1997.

\bibitem{SABOT}
Christophe Sabot.
\newblock Spectral properties of self-similar lattices and iteration of
  rational maps.
\newblock {\em M\'em. Soc. Math. Fr. (N.S.)}, (92):vi+104, 2003.

\bibitem{SHAF}
Igor~R. Shafarevich.
\newblock {\em Basic algebraic geometry. 1}.
\newblock Springer-Verlag, Berlin, second edition, 1994.
\newblock Varieties in projective space, Translated from the 1988 Russian
  edition and with notes by Miles Reid.

\bibitem{SIBONY}
Nessim Sibony.
\newblock Dynamique des applications rationnelles de {$\mathbb P^k$}.
\newblock In {\em Dynamique et g\'eom\'etrie complexes ({L}yon, 1997)},
  volume~8 of {\em Panor. Synth\`eses}, pages ix--x, xi--xii, 97--185. Soc.
  Math. France, Paris, 1999.

\end{thebibliography}
\end{document}